\begin{document}

\title{Simple Braids Tend toward Positive Entropy}

\author{Luke Robitaille}
\address{Massachusetts Institute of Technology, 77 Massachusetts Avenue, Cambridge, MA 02139}
\email{lrobitai@mit.edu}

\author[Minh-T\^{a}m Trinh]{Minh-T\^{a}m Quang Trinh}
\address{Massachusetts Institute of Technology, 77 Massachusetts Avenue, Cambridge, MA 02139}
\email{mqt@mit.edu}

\begin{abstract}
A simple braid is a positive braid that can be drawn so that any two strands cross at most once.
We prove that as $n \to \infty$, the proportion of simple braids on $n$ strands that have positive topological entropy tends toward $100\%$.
Notably, such braids are either pseudo-Anosov or reducible with a pseudo-Anosov component.
Our proof involves a method of reduction from simple braids to non-simple $3$-strand braids that may be of independent interest.
\end{abstract}

\maketitle

%\setcounter{tocdepth}{1}
%\tableofcontents
\thispagestyle{empty}

%--------------------------------------------------------------------------

%	\mainmatter

\section{Introduction}

\subsection{}

%\begin{align}
%\Br_n = \left\langle \sigma_1, \ldots, \sigma_{n - 1}\, \middle|
%	\begin{array}{ll}
%	\sigma_i \sigma_{i + 1} \sigma_i = \sigma_{i + 1} \sigma_i \sigma_{i + 1}
%		&i = 1, \ldots, n - 1,\\
%	\sigma_i \sigma_j = \sigma_j \sigma_i
%		&|i - j| > 1
%	\end{array}\right\rangle,
%\end{align}

Let $\Br_n$ be the braid group on $n$ strands.
A braid $\beta \in \Br_n$ is \dfemph{simple} iff, in some planar diagram for $\beta$, the crossings are all positive and any two strands cross at most once.
The subset of simple braids $E_n \subseteq \Br_n$ forms a generating set: the most natural one for the Garside theory of $\Br_n$ \cite{em}.

At the same time, $\Br_n$ can be identified with the mapping class group of a disk with $n$ marked points rel its boundary.
By \cite{akm}, every self-map of a compact topological surface can be assigned a nonnegative real number called its \dfemph{topological entropy}, or \dfemph{entropy} for short, roughly measuring the growth rate of its mixing of open covers.
The entropy of a mapping class is defined to be the infimum of the entropies of the maps it represents.
The goal of this note is to prove:

\begin{thm}\label{thm:main}
The proportion of simple braids on $n$ strands that have positive topological entropy tends to 100\% as $n$ tends to infinity.
\end{thm}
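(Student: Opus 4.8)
The plan is to recast the statement probabilistically and then exhibit a single pseudo-Anosov witness for almost every braid. Recall that the simple braids $E_n$ are exactly the \dfemph{positive permutation braids}, so sending $\sigma \in S_n$ to its permutation braid $\beta_\sigma$ gives a bijection $S_n \to E_n$ and $|E_n| = n!$. Thus Theorem~\ref{thm:main} is equivalent to the assertion that, for $\sigma$ drawn uniformly at random from $S_n$, the probability that $\beta_\sigma$ has positive entropy tends to $1$. By the Nielsen--Thurston trichotomy, $\beta_\sigma$ fails to have positive entropy precisely when it is periodic or reducible with every component periodic; so it suffices to produce, for all but a vanishing fraction of $\sigma$, at least one pseudo-Anosov piece. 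The periodic case is harmless: a periodic $n$-braid is conjugate to a power of $\sigma_1\cdots\sigma_{n-1}$ or of $\sigma_1^2\sigma_2\cdots\sigma_{n-1}$, and standard counting shows the fraction of $\sigma$ with $\beta_\sigma$ periodic tends to $0$. The real content is to defeat the reducible-but-everywhere-periodic case.

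First I would isolate the tool suggested by the abstract: a \emph{reduction} attaching to (almost every) $\beta_\sigma$ a $3$-strand braid $\gamma_\sigma$ realized as a dynamical \emph{factor} of $\beta_\sigma$ --- concretely, an equivariant collapse of the punctured disk onto a $3$-punctured model surface intertwining $\beta_\sigma$ with $\gamma_\sigma$. Since entropy cannot increase under passage to a factor, any such $\gamma_\sigma$ satisfies $h(\beta_\sigma) \ge h(\gamma_\sigma)$, so it is enough to arrange that $\gamma_\sigma$ be pseudo-Anosov. The subtlety --- and the whole point of the construction --- is that a naive block-collapse of a positive simple braid yields only a \emph{simple} $3$-braid, and every simple $3$-braid has zero entropy (in $\mathrm{PSL}_2(\mathbf{Z}) = \Br_3/\langle \Delta^2 \rangle$ each of the six elements of $E_3$ lands on a torsion or parabolic element, never a hyperbolic one). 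The reduction must therefore be genuinely \emph{non-simple}: it has to fold the $n-3$ auxiliary strands into the topology of the quotient so that the three surviving strands wind with mixed, non-monotone crossings, i.e. so that $\gamma_\sigma$ maps to a \emph{hyperbolic} element of $\mathrm{PSL}_2(\mathbf{Z})$.

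With the reduction in hand, I would prove genericity using the classification of pseudo-Anosov $3$-braids: $\gamma$ is pseudo-Anosov exactly when its image in $\mathrm{PSL}_2(\mathbf{Z})$ has $|\mathrm{tr}| > 2$, that is, when it is neither elliptic nor parabolic. I would translate this into an explicit combinatorial condition $C(\sigma)$ on the permutation --- a sign or parity condition on the collapsed crossing data, or the occurrence (a growing number of times) of a fixed local pattern forcing a sign change --- and then show $P(C(\sigma)) \to 1$ by a routine pattern-occurrence argument: a uniform random permutation realizes any fixed finite pattern with asymptotic frequency tending to its expectation, so the hyperbolicity witness appears almost surely. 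Since the reduced braids $\gamma_\sigma$ have complexity growing with $n$, the thin set of periodic and reducible $3$-braids is avoided with probability tending to $1$.

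The main obstacle is the construction of the reduction itself, where I expect nearly all the work to lie. One must (i) define the equivariant collapse for a density-one family of $\sigma$ and verify it is a bona fide topological factor, so that the comparison $h(\beta_\sigma) \ge h(\gamma_\sigma)$ is valid; and (ii) arrange the collapse so that $\gamma_\sigma$ is non-simple and, for almost every $\sigma$, hyperbolic --- that is, so that folding the auxiliary strands into the quotient reliably manufactures the sign changes needed for $|\mathrm{tr}|>2$. A secondary difficulty is bookkeeping with the Nielsen--Thurston canonical reduction system to ensure that the pseudo-Anosov piece we exhibit is genuinely a piece of $\beta_\sigma$ and is not washed out by an isotopy. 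Once the reduction is pinned down, the probabilistic step should be comparatively routine.
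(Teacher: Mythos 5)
There is a genuine gap: the reduction that produces the $3$-strand witness---the heart of the proof---is never constructed in your proposal; you explicitly defer it as ``the main obstacle \ldots\ where I expect nearly all the work to lie.'' Moreover, the mechanism you gesture at (an equivariant collapse realizing a topological factor, defined for a density-one family of permutations) is not what makes the argument work, and it is unclear it can be made to exist. The paper's mechanism is elementary and applies to \emph{every} cycle of length divisible by $3$, with no collapse and no semiconjugacy to verify: if $c \in S_{3N}$ is a $3N$-cycle, then the power $\sigma_c^N$ stabilizes the $3$-element set $I' = \{1, a_N, a_{2N}\}$ (the orbit of $1$ under $\langle c^N\rangle$), so one may simply \emph{delete} the other $3N - 3$ strands; forgetting punctures can only decrease the entropy of a mapping class (\Cref{lem:stable}), and $h(\sigma_c^N) \leq N h(\sigma_c)$ by \Cref{lem:akm}, which together give $h(\sigma_c) \geq \frac{1}{N} h(\sigma_{\vec{p}(c)})$, where $\sigma_{\vec{p}(c)} \in \Br_3$ is the product of the $N$ local permutation $3$-braids read off from $c$ (\Cref{lem:cycle-entropy}). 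This ``take a power, then delete strands'' trick is exactly what manufactures a non-simple $3$-braid (a product of $N$ simple ones) while keeping a valid entropy comparison, and it also lets the paper avoid all Nielsen--Thurston bookkeeping: positive entropy is certified directly by Fried--Kolev via $|\tr(\gamma(\cdot))| > 2$ in $\SL_2(\bb{Z})$ (\Cref{cor:sl-2}), with no need to exhibit a pseudo-Anosov piece (indeed, pseudo-Anosov genericity is left as a conjecture in the paper).

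The probabilistic step is also not the ``routine pattern-occurrence argument'' you describe, and hyperbolicity of a single reduced braid is \emph{not} shown to hold almost surely. The paper only proves that among $3N$-cycles with $N \geq 7$, a fixed positive proportion---at least $6^{-6}$---yields positive entropy; this requires an $\SL_2(\bb{Z})$ counting lemma (\Crefrange{lem:crazy}{lem:6}: an explicit finite list of pure positive $3$-braids $\beta_i$ such that no single $g \in \SL_2(\bb{Z})$ satisfies $|\tr(g\gamma(\beta_i))| \leq 2$ for all $i$) together with the equidistribution of the fibers of $\vec{p}$ (\Cref{lem:fiber}). To boost a constant proportion to probability tending to $1$, one needs a random permutation to contain \emph{many} disjoint cycles of length divisible by $3$ and at least $21$, each giving an independent chance of success (\Cref{prop:main}); and that statement about cycle structure is itself nontrivial---it is proved in \Cref{sec:cycles} via Stanley's exponential formula and binomial-coefficient asymptotics, not via occurrence of a fixed finite pattern. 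Your proposal addresses neither the divisibility-by-$3$ constraint, nor why a single reduction should succeed with high probability (the paper cannot show this), nor how ``patterns'' in a permutation would control the trace of a product of $N$ matrices in $\SL_2(\bb{Z})$. So while your high-level slogan---reduce to non-simple $3$-braids and detect entropy through $\SL_2(\bb{Z})$---matches the paper, both load-bearing steps are missing.
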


In fact, we give a more precise version: \Cref{thm:main-quantitative}.
The idea of the proof is to reduce from studying simple braids on $n$ strands to studying non-simple braids on $3$ strands, whose positive entropy can be detected via the quotient homomorphism $\Br_3 \to \SL_2(\bb{Z})$.
The reduction step, as well as the combinatorics that ensures that sufficiently many of the resulting $3$-strand braids have positive entropy, may be of independent interest.

\subsection{}

One motivation for \Cref{thm:main} is the study of a different, but closely related, property of braids.
Recall that under the Nielsen--Thurston classification, a mapping class is either \dfemph{periodic}, \dfemph{reducible}, or \dfemph{pseudo-Anosov}.
These options amount to the possible dynamics for its action on simple closed curves \cite{thurston}.
The entropy and Nielsen--Thurston type of a mapping class constrain each other:
Namely, its entropy is zero if and only if it is either periodic or reducible with solely periodic components \cite{bb}.

Caruso and Wiest showed that if $n \geq 3$, then in the Cayley graph of $(\Br_n, E_n)$, the proportion of pseudo-Anosov braids in the ball of radius $\ell$ tends to 100\% as $\ell$ tends to infinity \cites{caruso, cw}.
This confirmed a folklore expectation dating to the work of Thurston.
Mahler and Sisto showed similar results, phrased in terms of random walks in non-elementary subgroups \cites{maher, sisto}.

The following sharpening of \Cref{thm:main}, which we leave to future work, would be directly complementary to the work of Caruso and Wiest.

\begin{conj}
The proportion of simple braids on $n$ strands that are pseudo-Anosov tends to 100\% as $n$ tends to infinity.
\end{conj}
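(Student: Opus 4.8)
The plan is to deduce the conjecture from \Cref{thm:main} by upgrading "positive entropy" to "pseudo-Anosov." By the dichotomy of \cite{bb} recalled above, a mapping class of positive entropy is either pseudo-Anosov or reducible with a pseudo-Anosov component; hence \Cref{thm:main} already shows that almost all simple braids fall into one of these two cases. Since $\{$zero entropy$\}$ is a vanishing fraction by \Cref{thm:main}, and a reducible braid of zero entropy lies in that set, the conjecture is equivalent to showing that the proportion of \emph{reducible} simple braids on $n$ strands tends to $0$. As the simple braids $E_n$ are exactly the permutation braids $\beta_\sigma$ indexed by $\sigma \in S_n$, with $|E_n| = n!$, this becomes a counting problem for a uniformly random permutation: the goal is to prove that the number of $\sigma \in S_n$ for which $\beta_\sigma$ preserves an essential multicurve is $o(n!)$.

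A first, naive attempt would translate invariance of a multicurve into combinatorial data on the punctures. A reducible $\beta_\sigma$ carries a canonical reduction system, an invariant multicurve whose outermost components group the punctures into blocks that $\beta_\sigma$ permutes, acting on the blocks as a braid on fewer strands. One would hope to show that this forces $\sigma$ to respect a nontrivial block structure --- an interval-preserving or imprimitivity-type condition --- and then conclude by a union bound: the number of candidate block structures is subexponential, while for each fixed structure the compatible permutations form a vanishing fraction of $n!$. Granting such a dictionary, one could even hope for an explicit rate matching \Cref{thm:main-quantitative}.

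The hard part is that reduction curves need not be round, i.e.\ need not bound disks around contiguous intervals of punctures; an invariant multicurve can be arbitrarily tangled, and the clean block-structure dictionary is only available after a normalization that one does not have while counting individual group elements rather than conjugacy classes. To circumvent this I would instead certify irreducibility directly, by strengthening the reduction underlying \Cref{thm:main}: rather than extracting a single non-simple $3$-strand sub-braid with hyperbolic $\SL_2(\bb{Z})$ image, produce with probability tending to $1$ a whole family of pseudo-Anosov $3$-strand pieces whose supporting subsurfaces mutually overlap and together fill the punctured disk. Overlapping pieces cannot be the disjoint components of any single reduction system, and a filling family leaves no essential curve with bounded intersection under iteration; together these would rule out every invariant multicurve at once, forcing $\beta_\sigma$ to be pseudo-Anosov. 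Arranging that such an overlapping, filling family exists for almost every $\sigma$ --- and quantifying how fast --- is exactly where I expect the genuine obstacle, and the gap between \Cref{thm:main} and the conjecture, to lie.
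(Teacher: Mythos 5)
The statement you are trying to prove is not proved in the paper at all: it is stated there as a conjecture, explicitly ``left to future work,'' so there is no proof of it in the paper to compare yours against. Your proposal does not close that gap either, and to your credit you say so yourself. What is correct and worth keeping is your opening reduction: by the dichotomy of \cite{bb}, periodic mapping classes have zero entropy, so any simple braid that is neither pseudo-Anosov nor of zero entropy must be reducible (with a pseudo-Anosov component); combined with \Cref{thm:main}, the conjecture is therefore equivalent to showing that the proportion of $w \in S_n$ with $\sigma_w$ reducible tends to $0$. That is a genuine, if modest, observation, and it correctly isolates reducibility as the only enemy.

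Everything after that is a research plan rather than a proof, and the plan's one essential step is precisely the open problem. Your ``round curves / block structure'' dictionary fails for the reason you yourself name: canonical reduction systems need not be round, and normalizing them by conjugation destroys the uniform counting over elements of $S_n$. Your proposed fix --- produce, for almost every $w$, a family of pseudo-Anosov $3$-strand pieces whose supporting subsurfaces mutually overlap and fill the punctured disk --- cannot be extracted from the paper's machinery: the only tool there for localizing entropy is restriction to a stable subset of punctures (\Cref{lem:stable} and \eqref{eq:stable}), and certificates obtained by restriction are intrinsically incapable of distinguishing a pseudo-Anosov braid from a reducible braid having a pseudo-Anosov component, which is exactly why the abstract stops at that dichotomy and why the authors state the pseudo-Anosov strengthening only as a conjecture. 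Concretely, the subsurfaces produced by the paper's reduction (disks around the three-point orbits $\{1, a_N, a_{2N}\}$ inside a single long cycle of $w$) sit disjointly inside the disk rather than overlapping, so no amount of iterating that construction yields the filling family you need. Supplying such a family --- or any other irreducibility certificate valid for a uniformly random permutation --- is the actual content of the conjecture, and your proposal leaves it open.
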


\subsection{}

We thank the 2021 MIT PRIMES-USA program for putting us in contact, and for helping to fund this research.
We thank Stephen Bigelow, Benson Farb, and Reid Harris for answering questions about the Burau representation and topological entropy, and Tanya Khovanova and Kent Vashaw for proofreading an earlier draft.
During the PRIMES program, the second author was supported by an NSF Mathematical Sciences Research Fellowship, Award DMS-2002238.

We dedicate this note to the memory of Kevin James, who mentored the second author at the 2012 Clemson University REU in Computational Algebraic Geometry, Combinatorics, and Number Theory.

\section{Topological Entropy}\label{sec:entropy}

In this section, we collect the only properties of topological entropy that we actually need.

\subsection{}

Let $S$ be an compact topological surface, possibly with boundary, and $I \subset S$ a finite set of  points in its interior.
Let $M = \Mod(S, I, \partial S)$ be the mapping class group of $(S, I)$ rel the boundary $\partial S$.
Explicitly, $M = \pi_0(\Homeo^+(S, I, \partial S))$, where $\Homeo^+(S, I, \partial S)$ is the group of self-homeomorphisms of $S$ that stabilize $I$ and fix $\partial S$, endowed with the compact-open topology \cite[\S{2.1}]{fm}.

\subsection{}

We define the \dfemph{entropy} of a map $f : S \to S$ to be its topological entropy $h(f)$ in the sense of \cite{akm}.
We define the \dfemph{entropy} of a mapping  class $\phi \in M$ to be
\begin{align}
h(\phi) = \inf_{f \in \phi} h(f),
\end{align}
where the notation $f \in \phi$ means $f$ is a representative of $\phi$.

\begin{lem}\label{lem:akm}
The function $h : M \to \bb{R}_{\geq 0}$ has the following properties:
\begin{enumerate}
\item 	$h$ is constant along conjugacy classes.

\item 	For any $\phi \in M$ and integer $k > 0$, we have $h(\phi^k) \leq k h(\phi)$.

\end{enumerate}
\end{lem}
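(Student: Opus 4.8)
The plan is to deduce both parts from the corresponding classical properties of topological entropy for continuous self-maps of a compact space, in the sense of \cite{akm}: namely, that $h$ is invariant under topological conjugacy, and that $h(f^k) = k\,h(f)$ for every integer $k > 0$. The only genuine content is to track how these interact with the infimum that defines $h$ on $M$.

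For part (1), I would fix mapping classes $\phi, \psi \in M$ and choose a homeomorphism $g \in \psi$. Conjugation $f \mapsto g f g^{-1}$ is a bijection of $\Homeo^+(S, I, \partial S)$ onto itself, with inverse $\eta \mapsto g^{-1} \eta g$, and it carries isotopies to isotopies; hence it sends the isotopy class $\phi$ bijectively onto the isotopy class $\psi\phi\psi^{-1}$. Since $g$ is a homeomorphism, topological conjugacy invariance of entropy gives $h(gfg^{-1}) = h(f)$ for each $f \in \phi$. Taking the infimum over $f \in \phi$ and using this bijection of representatives, I would conclude
\begin{align}
h(\psi\phi\psi^{-1}) = \inf_{f \in \phi} h(gfg^{-1}) = \inf_{f \in \phi} h(f) = h(\phi).
\end{align}

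For part (2), I would use that if $f$ represents $\phi$, then $f^k$ represents $\phi^k$, so $\{f^k : f \in \phi\}$ is a subset of the representatives of $\phi^k$. The power rule $h(f^k) = k\,h(f)$ then yields
\begin{align}
h(\phi^k) = \inf_{\eta \in \phi^k} h(\eta) \leq \inf_{f \in \phi} h(f^k) = k \inf_{f \in \phi} h(f) = k\,h(\phi),
\end{align}
where the inequality is exactly the passage from the infimum over all representatives of $\phi^k$ to the infimum over the smaller set of $k$-th powers.

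The steps are routine once the classical entropy facts are granted, so I do not expect a serious obstacle. The one point requiring care is the asymmetry in part (2): equality can fail because a general representative of $\phi^k$ need not be the $k$-th power of any representative of $\phi$, which is precisely why only the inequality $h(\phi^k) \leq k\,h(\phi)$ is asserted. I would also verify the composition convention for $M$, so that $g f g^{-1}$ indeed represents $\psi\phi\psi^{-1}$ and $f^k$ represents $\phi^k$.
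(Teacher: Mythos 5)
Your proposal is correct and matches the paper's approach: the paper's proof simply cites Theorems 1 and 2 of \cite{akm} (topological conjugacy invariance and the power rule $h(f^k) = k\,h(f)$), which are exactly the two classical facts you invoke. Your write-up just makes explicit the routine bookkeeping with the infimum over representatives, including the correct observation that part (2) is only an inequality because not every representative of $\phi^k$ is a $k$-th power.
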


\begin{proof}
Parts (1) and (2) respectively follow from Theorems 1 and 2 in \emph{ibid.}
\end{proof}

\subsection{}

Suppose that $I' \subseteq I$.
By construction, a mapping class $\phi \in M$ can be lifted to $M' \vcentcolon= \Mod(S, I', \partial S)$ if and only if some, or equivalently any, representative of $\phi$ stabilizes $I'$.
We deduce that:

\begin{lem}\label{lem:stable}
If $\phi \in M$ lifts to $\phi' \in M'$, then $h(\phi) \geq h(\phi')$.
\end{lem}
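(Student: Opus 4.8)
The plan is to exploit the fact that both $h(\phi)$ and $h(\phi')$ are infima of the same entropy functional $h(\cdot)$, taken over sets of representative homeomorphisms, and to show that the representatives of $\phi$ form a subset of the representatives of $\phi'$. Since the infimum over a subset is at least the infimum over the ambient set, this immediately yields $h(\phi) \geq h(\phi')$.

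First I would fix an arbitrary representative $f$ of $\phi$ in $M$, so that $f \in \Homeo^+(S, I, \partial S)$ lies in the isotopy class $\phi$. By the discussion preceding the statement, the hypothesis that $\phi$ lifts to $\phi'$ means that \emph{any} representative of $\phi$ stabilizes $I'$; in particular $f(I') = I'$. Combined with $f(I) = I$ and $f$ fixing $\partial S$, this shows $f \in \Homeo^+(S, I', \partial S)$. I would then check that the class of $f$ in $M'$ is exactly $\phi'$: if $f_0$ is a representative witnessing the lift, then $f$ and $f_0$ are isotopic through $\Homeo^+(S, I, \partial S)$, and since such an isotopy is in particular an isotopy through $\Homeo^+(S, I', \partial S)$, the maps $f$ and $f_0$ represent the same class $\phi'$ in $M'$.

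The second step is to observe that the topological entropy $h(f)$ is an invariant of the self-map $f : S \to S$ alone, independent of which marked-point structure we choose to remember. Hence the set $\{\,h(f) : f \in \phi\,\}$ of entropy values realized by representatives of $\phi$ in $M$ is contained in the set $\{\,h(g) : g \in \phi'\,\}$ of entropy values realized by representatives of $\phi'$ in $M'$. Taking infima and using monotonicity of the infimum under inclusion gives $h(\phi) = \inf_{f \in \phi} h(f) \geq \inf_{g \in \phi'} h(g) = h(\phi')$.

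I expect no serious obstacle here: the argument is entirely bookkeeping about which homeomorphisms represent which class. The only point requiring genuine care is the verification that \emph{every} representative of $\phi$—not merely the single one witnessing the lift—stabilizes $I'$, so that the containment of representative sets is real rather than just nonempty. This is precisely the ``some, or equivalently any'' remark preceding the statement, which holds because all representatives of a fixed $\phi$ induce the same permutation of $I$ and hence exhibit the same set-stabilization behavior on the subset $I' \subseteq I$.
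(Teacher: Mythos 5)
Your proposal is correct and is essentially the paper's own argument: the paper treats the lemma as an immediate deduction from the preceding remark that liftability means every representative of $\phi$ stabilizes $I'$, so that the representatives of $\phi$ form a subset of the representatives of $\phi'$, and the infimum defining $h(\phi)$ is taken over a smaller set than that defining $h(\phi')$. You have simply written out the bookkeeping (including the observation that entropy depends only on the underlying self-map, not on the marked-point structure) that the paper leaves implicit.
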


\subsection{}

Let $D$ be a closed disk, and $I \subset D$ a finite set of points in its interior.
Let 
\begin{align}
\Br_I = \pi_1(\Conf^{|I|}(D), I),
\end{align}
where $\Conf^n(D)$ denotes the configuration space of $n$ unordered points in $D$.
As explained in \cite[\S{9}.1.3]{fm}, there is an explicit isomorphism
\begin{align}
\beta \mapsto \phi(\beta) : \Br_I \xrightarrow{\sim} \Mod(D, I, \partial D).
\end{align}
At the same time, we can identify $\Br_I$ with the usual braid group on $|I|$ strands, up to fixing an ordering of $I$.

We define the \dfemph{entropy} of a braid $\beta$ to be that of the corresponding mapping class:
$h(\beta) = h(\phi(\beta))$.
Now we can rewrite \Cref{lem:stable} in terms of braids.
For any $\beta \in \Br_I$ and $I' \subseteq I$, we say that $I'$ is \dfemph{stable} under $\beta$ iff we can delete strands from $\beta$ to obtain an element of $\Br_{I'}$.
In this case, we denote the new braid by $\beta|_{I'}$.
Since $\phi(\beta)$ lifts to $\phi(\beta|_{I'})$, \Cref{lem:stable} says that
\begin{align}\label{eq:stable}
h(\beta) \geq h(\beta|_{I'}).
\end{align}

\subsection{}
 
For any integer $N > 0$ and $g \in \Mat_N(\bb{C}[t^{\pm 1}])$, the characteristic polynomial of $g$ is a polynomial of degree $N$ with coefficients in $\bb{C}[t^{\pm 1}]$.
For any complex number $z \neq 0$, let $\Spec(g(z))$ be the eigenvalue spectrum of $g(z) \vcentcolon= g|_{t \to z}$, viewed as an unordered multiset of $N$ complex numbers.
The \dfemph{spectral radius} of $g$, which we will denote $\radius(g)$, is the maximum value of $|\lambda|$ as we run over $z$ on the unit circle and $\lambda \in \Spec(g(z))$.
%This defines a function $\radius : \Mat_N(\bb{C}[t^{\pm 1}]) \to \bb{R}_{\geq 0}$.
The following result linking spectral radius to entropy was shown by Fried \cite{fried} and Kolev \cite{kolev} independently:

\begin{thm}[Fried, Kolev]\label{thm:radius}
Let $\rho : \Br_n \to \GL_n(\bb{Z}[t^{\pm 1}])$ be the unreduced Burau representation of $\Br_n$, as in \cite[\S{2}]{kolev}.
Then $\log \radius(\rho(\beta)) \leq h(\beta)$ for all $\beta \in \Br_n$.
\end{thm}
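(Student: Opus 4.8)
The plan is to realize the (reduced) Burau representation as the action of the mapping class $\phi(\beta)$ on a \emph{twisted} first-homology group, and then to bound the spectral radius of that action by $e^{h(\beta)}$ using Manning's homological lower bound for entropy; the unreduced representation differs only by a one-dimensional summand spanned by the all-ones vector, on which every generator $\sigma_i$ acts trivially, so it contributes the eigenvalue $1$ and hence nothing to $\log\radius$. Concretely, model $\beta$ as a mapping class of the compact surface $\Sigma$ obtained from $D$ by deleting a small open disk around each point of $I$, so that $\Sigma$ is a compact surface with boundary and $\phi(\beta)$ is represented by homeomorphisms $f$ fixing $\partial\Sigma$. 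The total winding-number homomorphism $w : \pi_1(\Sigma) \to \bb{Z}$, sending each loop that encircles a single puncture once to $1$, determines for every $z \in \bb{C}^\times$ a rank-one local system $\mathcal{L}_z$ on $\Sigma$ with monodromy $z^{w}$. A standard computation identifies the induced operator on $H_1(\Sigma; \mathcal{L}_z)$ with the reduced Burau specialization at $t \to z$, so its eigenvalues are exactly the nontrivial part of $\Spec(\rho(\beta)(z))$.

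The core step is the pointwise bound $\log|\lambda| \leq h(\beta)$ for each $z$ on the unit circle and each $\lambda \in \Spec(\rho(\beta)(z))$, which I would first establish when $z = e^{2\pi i p/q}$ is a root of unity. In that case $\mathcal{L}_z$ is pulled back along the degree-$q$ cyclic cover $\pi : \Sigma_q \to \Sigma$ classified by $w \bmod q$; here $\Sigma_q$ is again a compact surface with boundary, and $f$ lifts to a homeomorphism $f_q$ on $\Sigma_q$. The deck group $\bb{Z}/q$ acts on $H_1(\Sigma_q; \bb{C})$ commuting with $(f_q)_\ast$, and the decomposition $H_1(\Sigma_q;\bb{C}) = \bigoplus_\chi H_1(\Sigma;\mathcal{L}_\chi)$ into character-isotypic summands exhibits $H_1(\Sigma;\mathcal{L}_z)$, together with the operator $\rho(\beta)(z)$, as an $(f_q)_\ast$-invariant subspace. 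Manning's inequality applied to the compact surface $\Sigma_q$ (passing to the closed double to cover the boundary case) gives $h(f_q) \geq \log\radius\bigl((f_q)_{\ast,1}\bigr)$, and restricting to the invariant summand yields $\radius(\rho(\beta)(z)) \leq \radius\bigl((f_q)_{\ast,1}\bigr)$. Since $\pi$ is a finite covering and $f_q$ lifts $f$, Bowen's fiber-entropy inequality (the fibers being finite) forces $h(f_q) = h(f)$. Chaining these bounds gives $\log|\lambda| \leq h(f)$ for every representative $f$, and taking the infimum over $f$ yields $\log|\lambda| \leq h(\beta)$ at every root of unity $z$.

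To upgrade from roots of unity to the whole circle, note that the characteristic polynomial of $\rho(\beta)$ has coefficients in $\bb{C}[t^{\pm 1}]$, so $z \mapsto \Spec(\rho(\beta)(z))$ varies continuously; because roots of unity are dense in the unit circle, the bound $\log|\lambda| \leq h(\beta)$ persists for every unit $z$ and every $\lambda \in \Spec(\rho(\beta)(z))$. Taking the maximum over $z$ on the unit circle and $\lambda$ in the spectrum then gives $\log\radius(\rho(\beta)) \leq h(\beta)$, as claimed (the contribution of the trivial unreduced summand being the harmless eigenvalue $1$).

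I expect the genuine obstacle to lie in the dynamical input of the second paragraph rather than in the bookkeeping. Manning's inequality is a statement about ordinary homology of a \emph{compact} manifold, whereas the Burau specialization at an irrational $z$ naturally lives on $H_1$ of the \emph{non-compact} infinite cyclic cover of $\Sigma$, where no clean entropy--homology inequality is available; the device that circumvents this is precisely the reduction to finite cyclic covers at roots of unity followed by the continuity argument. The two points requiring genuine care are therefore (i) that topological entropy is exactly preserved under the finite covers $\Sigma_q \to \Sigma$, so that $h(f_q)$ does not grow with $q$, and (ii) that the isotypic decomposition of $H_1(\Sigma_q;\bb{C})$ matches the twisted homology and the Burau specialization on the nose, with the reduced/unreduced normalization accounted for correctly. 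Both are standard once isolated, but must be checked with attention to the boundary of $\Sigma$.
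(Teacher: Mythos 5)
The paper does not actually prove \Cref{thm:radius}; it quotes the result as a black box from Fried \cite{fried} and Kolev \cite{kolev}, so your proposal has to be compared with the cited proofs rather than with anything in the text. Measured against Kolev's argument, your outline is essentially a faithful reconstruction of it: identify the Burau specialization at $t = z$ with the action on homology twisted by the winding-number character; at a root of unity $z$ of order $q$, realize that twisted homology inside $H_1$ of the degree-$q$ cyclic cover via the isotypic decomposition of the pushforward of the trivial local system; bound the spectral radius there by entropy using Manning's homological inequality on the compact cover; use Bowen's finite-fiber inequality to see that passing to the cover does not change entropy; and finish by density of roots of unity together with continuity of the eigenvalues of $\rho(\beta)(z)$ in $z$. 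The reduced/unreduced discrepancy is indeed only the unit eigenvalue you describe, so it contributes nothing to $\log \radius$.

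One point deserves a genuine warning, and it is exactly where your version deviates from Kolev's. You replace the marked disk $(D, I)$ by the blown-up compact surface $\Sigma$ and take the infimum of $h(f)$ over homeomorphisms of $\Sigma$ fixing $\partial \Sigma$. But $h(\beta)$ is by definition the infimum over representatives $f : (D, I) \to (D, I)$; such an $f$ need not preserve $\Sigma$, and it is not free that the two infima coincide (that they do is a consequence of Nielsen--Thurston/Fathi--Shub theory, a heavier input than the theorem merits). The standard repair, which is Kolev's device, is to avoid $\Sigma$ altogether: given an arbitrary representative $f$ on $(D, I)$, restrict it to the punctured disk $D \setminus I$, lift to the finite cyclic cover classified by the winding number mod $q$, and compactify that cover by adding one point over each end lying above a puncture. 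The lift extends continuously to this compact surface, the projection to $D$ is finite-to-one, so Bowen gives equality of entropies and Manning applies upstairs. The only cost is that filling in those points kills the homology classes of the puncture loops; this is harmless, because the lift permutes those classes, so the eigenvalues lost have modulus $1$, and any Burau eigenvalue of modulus greater than $1$ survives in the quotient. With that substitution, your proof goes through.
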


\begin{cor}
In \Cref{thm:radius}, the same conclusion would hold were $\rho$ the reduced, rather than unreduced, Burau representation.
\end{cor}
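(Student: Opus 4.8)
The plan is to deduce the reduced version directly from the unreduced one in \Cref{thm:radius}, using that the reduced Burau representation is a subquotient of the unreduced Burau representation $\rho$. Write $\bar\rho : \Br_n \to \GL_{n-1}(\bb{Z}[t^{\pm 1}])$ for the reduced representation. The structural input I would record first is the classical fact that $\rho$ admits an invariant line: in the standard conventions the column vector $(1,1,\ldots,1)^{\mathsf{T}}$ is fixed by each generator $\sigma_i$, hence by all of $\rho(\Br_n)$, and the induced action on the quotient is exactly $\bar\rho$. This is a one-line verification from the $2 \times 2$ Burau block, or can be cited.

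From this I would extract the only consequence I need, namely that for every $\beta \in \Br_n$ the characteristic polynomials are related by
\begin{align}
\det\bigl(\lambda I_n - \rho(\beta)\bigr) = \bigl(\lambda - c(\beta)\bigr)\, \det\bigl(\lambda I_{n-1} - \bar\rho(\beta)\bigr)
\end{align}
in $\bb{Z}[t^{\pm 1}][\lambda]$, where $c(\beta) \in \bb{Z}[t^{\pm 1}]$ records the eigenvalue on the invariant line; this holds because $\rho(\beta)$ is block triangular with respect to the flag defined by that line. In particular, specializing $t \to z$ for any $z \neq 0$ shows that $\Spec(\bar\rho(\beta)(z))$ is a submultiset of $\Spec(\rho(\beta)(z))$. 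Taking the maximum modulus over $\lambda$ in the spectrum and over $z$ on the unit circle therefore gives $\radius(\bar\rho(\beta)) \leq \radius(\rho(\beta))$.

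Finally I would combine this inequality with \Cref{thm:radius}. Since $\rho(\beta)$ is invertible, $\bar\rho(\beta)(z)$ has no zero eigenvalues, so $\radius(\bar\rho(\beta)) > 0$ and $\log$ is monotonic on the relevant range; hence
\begin{align}
\log \radius(\bar\rho(\beta)) \leq \log \radius(\rho(\beta)) \leq h(\beta),
\end{align}
which is exactly the asserted conclusion for the reduced representation. One does not even need to compute $c(\beta)$; mere containment of spectra suffices. For the record, $c(\beta)$ is a unit in $\bb{Z}[t^{\pm 1}]$, a monomial $\pm t^{e}$ determined by the exponent sum of $\beta$ and thus of modulus $1$ on the unit circle, so in fact $\radius(\rho(\beta)) = \max\bigl(\radius(\bar\rho(\beta)), 1\bigr)$.

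The argument is bookkeeping once the invariant line is identified, so I anticipate no real difficulty. The single point that warrants care is conventions: I must confirm that the quotient of $\rho$ by the invariant line is the reduced representation as normalized in \cite{kolev}, rather than a twist of it. A twist by $t \mapsto t^{-1}$ would be harmless, since $z \mapsto z^{-1}$ permutes the unit circle and hence preserves $\radius$, and a scalar or monomial twist would not affect the containment of spectra that drives the proof. Checking this compatibility is the only step where I would slow down.
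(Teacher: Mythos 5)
Your proof is correct and is essentially the paper's own argument: the paper simply cites the fact that, as a $\bb{Z}[t^{\pm 1}][\Br_n]$-module, the unreduced representation is the direct sum of the reduced and the trivial representations (Turaev, \S 1.3), whereas you use the block-triangular (invariant line plus quotient) form of the same structural fact, which yields the same containment of spectra and hence $\radius(\bar\rho(\beta)) \leq \radius(\rho(\beta))$, after which both proofs conclude by \Cref{thm:radius}. One cosmetic slip in your non-essential closing remark: since $(1, \ldots, 1)^{\mathsf{T}}$ is fixed by every $\rho(\sigma_i)$, the eigenvalue on the invariant line is exactly $c(\beta) = 1$ rather than $\pm t^{e}$; this changes nothing, as you correctly note that the proof needs only the containment of spectra.
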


\begin{proof}
As a $\bb{Z}[t^{\pm 1}][\Br_n]$-module, the unreduced representation is the direct sum of the reduced representation and the trivial representation \cite[\S{1.3}]{turaev}.
\end{proof}

\begin{cor}\label{cor:sl-2}
Let $\gamma : \Br_3 \to \SL_2(\bb{Z})$ be the reduced Burau representation at $n = 3$ and $t = -1$.
Then $|{\tr(\gamma(\beta))}| > 2$ implies $h(\beta) > 0$ for all $\beta \in \Br_3$.
\end{cor}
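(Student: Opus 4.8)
The plan is to combine the reduced-Burau form of the Fried--Kolev inequality (the corollary to \Cref{thm:radius}) with the elementary fact that an element of $\SL_2$ whose trace has absolute value exceeding $2$ is hyperbolic. First I would note that $\gamma$ is by definition the specialization of the reduced Burau representation $\rho$ at $t = -1$, so that $\gamma(\beta) = \rho(\beta)|_{t \to -1} = \rho(\beta)(-1)$ for every $\beta \in \Br_3$. Since $-1$ lies on the unit circle, the multiset $\Spec(\gamma(\beta)) = \Spec(\rho(\beta)(-1))$ is one of those entering the maximum that defines $\radius(\rho(\beta))$. Consequently
\begin{align}
\radius(\rho(\beta)) \geq \max_{\lambda \in \Spec(\gamma(\beta))} |\lambda|.
\end{align}

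Next I would analyze the eigenvalues of $A \vcentcolon= \gamma(\beta) \in \SL_2(\bb{Z})$. Writing its eigenvalues as $\lambda_1, \lambda_2$, we have $\lambda_1 \lambda_2 = \det A = 1$ and $\lambda_1 + \lambda_2 = \tr A$, so the $\lambda_i$ are the roots of $x^2 - (\tr A)\, x + 1$. If $|{\tr A}| > 2$, then the discriminant $(\tr A)^2 - 4$ is strictly positive, so the two eigenvalues are real and distinct. Their product being $1$, they cannot both have absolute value $1$ (else they would both equal $\pm 1$ and hence coincide), so one of them, say $\lambda_1$, satisfies $|\lambda_1| = \tfrac{1}{2}\big(|{\tr A}| + \sqrt{(\tr A)^2 - 4}\big) > 1$.

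Combining the two steps yields $\radius(\rho(\beta)) \geq |\lambda_1| > 1$, hence $\log \radius(\rho(\beta)) > 0$. The reduced-Burau form of \Cref{thm:radius} then gives $h(\beta) \geq \log \radius(\rho(\beta)) > 0$, which is the claim. I do not expect a genuine obstacle here; the only point demanding care is the bookkeeping that $t = -1$ is an admissible evaluation point—namely that it lies on the unit circle—and that $\gamma$ really is the reduced Burau representation read off at that point, so that $\Spec(\gamma(\beta))$ is legitimately among the spectra bounded by $\radius(\rho(\beta))$. The remaining input is the standard observation that a matrix in $\SL_2$ with $|{\tr}| > 2$ is hyperbolic.
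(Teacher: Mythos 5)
Your proposal is correct and follows essentially the same route as the paper: the paper's one-line proof likewise invokes the reduced-Burau form of \Cref{thm:radius} at the unit-circle point $t = -1$ and the fact that $|{\tr}| > 2$ forces an eigenvalue of absolute value greater than $1$. You have merely spelled out the bookkeeping that the paper leaves implicit.
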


\begin{proof}
If $|{\tr(\gamma(\beta))}| > 2$, then $\gamma(\beta)$ must have an eigenvalue greater than $1$. %, since its characteristic polynomial in $X$ is given by $X^2 - \tr(\gamma(\beta))X + 1$.
\end{proof}

\section{Simple Braids}\label{sec:braids}

\subsection{}

Following Artin, the braid group on $n$ strands has the presentation
\begin{align}
\Br_n = \left\langle \sigma_1, \ldots, \sigma_{n - 1}\, \middle|
	\begin{array}{ll}
	\sigma_i \sigma_{i + 1} \sigma_i = \sigma_{i + 1} \sigma_i \sigma_{i + 1}
		&i = 1, \ldots, n - 1,\\
	\sigma_i \sigma_j = \sigma_j \sigma_i
		&|i - j| > 1
	\end{array}\right\rangle,
\end{align}
where $\sigma_i$ represents the positive simple twist of the $i$th and $(i + 1)$th strands.
The \dfemph{writhe} of a braid on $n$ strands is its image under the quotient map $\ell : \Br_n \to \bb{Z}$ that sends $\ell(\sigma_i) = 1$ for all $i$.

\subsection{}

Let $S_n$ be the symmetric group on $n$ letters.
Let $s_i$ be the simple transposition that swaps $i$ and $i + 1$.
There is a quotient homomorphism $\Br_n \to S_n$ given by $\sigma_i \mapsto s_i$.

The set of simple braids $E_n \subseteq \Br_n$ is the image of a right inverse to this quotient map.
Indeed, every permutation $w \in S_n$ can be written as
\begin{align}
w = (s_{i_1} \cdots s_1) (s_{i_2} \cdots s_2) \cdots (s_{i_n} \cdots s_n)
\end{align}
for some uniquely determined $i_1, i_2, \ldots, i_n$ such that $j - 1 \leq i_j \leq n - 1$.
Let
\begin{align}
\sigma_w = (\sigma_{i_1} \cdots \sigma_1) (\sigma_{i_2} \cdots \sigma_2) \cdots (\sigma_{i_n} \cdots \sigma_n).
\end{align}
Then $w \mapsto \sigma_w$ is a right inverse of the quotient map $\Br_n \to S_n$, and furthermore, $E_n = \{\sigma_w \mid w \in S_n\}$ \cite{em}.

\subsection{}

For any $w \in S_3$ and integer $N > 0$, let
\begin{align}
P(w, N) = \{\vec{w} = (w_1, \ldots, w_N) \in S_3^N \mid w_1 \cdots w_N = w\}.
\end{align}
For any $\vec{w} \in S_3^N$, let $\sigma_{\vec{w}} = \sigma_{w_1} \cdots \sigma_{w_N}$.
We now show that many $3$-strand braids of the form $\sigma_{\vec{w}}$ for some $\vec{w} \in P(w, N)$ are braids of positive entropy.

\begin{lem}\label{lem:crazy}
Let $\beta_1, \ldots, \beta_k \in \Br_3$ be a list of $3$-strand braids such that:
\begin{enumerate}
\item 	They are all positive, \emph{i.e.}, can be written without negative powers of the $\sigma_i$.

\item 	They are all pure, \emph{i.e.}, map to the identity of $S_n$.

\item 	There is no matrix $g \in \SL_2(\bb{Z})$ such that
		\begin{align}
		|{\tr(g\gamma(\beta_i))}| \leq 2
			\quad\text{for all $i$}.
		\end{align}
		
\end{enumerate}
Let $L = \max_i \ell(\beta_i)$, the maximum writhe among the $\beta_i$.
Then
\begin{align}
|\{\vec{w} \in P(w, N) \mid h(\sigma_{\vec{w}}) > 0 \}| \geq 6^{N - L - 1}
\end{align}
for any integer $N \geq L + 1$.
\end{lem}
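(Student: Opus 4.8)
The plan is to use \Cref{cor:sl-2} to replace ``positive entropy'' by the trace condition $|{\tr(\gamma(\sigma_{\vec w}))}| > 2$, and then to manufacture the required $6^{N-L-1}$ sequences by a counting argument in which a prefix of length $N-L-1$ ranges over all of $S_3^{N-L-1}$ while a suffix of length $L+1$ is chosen, using hypothesis (3), to simultaneously correct the permutation and force the trace above $2$. Throughout I write $A_v = \gamma(\sigma_v) \in \SL_2(\bb{Z})$ for $v \in S_3$, so that $\gamma(\sigma_{\vec w}) = A_{w_1} \cdots A_{w_N}$ because $\gamma$ is a homomorphism.

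First I would record the elementary bookkeeping about the $\beta_i$. Since $\beta_i$ is positive of writhe $\ell(\beta_i) \le L$, it can be written as a product of exactly $\ell(\beta_i)$ Artin generators, and each generator $\sigma_j$ is the simple braid $\sigma_{s_j}$ attached to the transposition $s_j$. Hence $\beta_i = \sigma_{u^{(i)}_1} \cdots \sigma_{u^{(i)}_{\ell(\beta_i)}}$ with each $u^{(i)}_t \in \{s_1, s_2\}$, and since $\beta_i$ is pure the product $u^{(i)}_1 \cdots u^{(i)}_{\ell(\beta_i)}$ is the identity of $S_3$. Padding with $L - \ell(\beta_i)$ copies of $e \in S_3$, for which $\sigma_e$ is trivial and $A_e$ is the identity matrix, I represent $\beta_i$ by a length-$L$ block of elements of $S_3$ whose permutation product is $e$ and whose matrix product is $\gamma(\beta_i)$.

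Next I fix an arbitrary prefix $(w_1, \ldots, w_{N-L-1}) \in S_3^{N-L-1}$, of which there are $6^{N-L-1}$. Put $p = w_1 \cdots w_{N-L-1}$, $g = A_{w_1} \cdots A_{w_{N-L-1}}$, and set the correction $c = p^{-1} w \in S_3$. I complete the prefix to length $N$ by appending the single entry $c$ followed by the length-$L$ block representing some $\beta_i$. The permutation product of the whole sequence is $p \cdot c \cdot e = w$, so $\vec w \in P(w,N)$, while $\gamma(\sigma_{\vec w}) = g A_c \gamma(\beta_i)$. Applying hypothesis (3) to the matrix $g A_c \in \SL_2(\bb{Z})$ yields an index $i$ with $|{\tr(g A_c \gamma(\beta_i))}| > 2$, and for this $i$ the sequence $\vec w$ has $h(\sigma_{\vec w}) > 0$ by \Cref{cor:sl-2}. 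Distinct prefixes give sequences differing in their first $N-L-1$ entries, hence distinct, so the construction exhibits at least $6^{N-L-1}$ sequences of positive entropy; the boundary case $N = L+1$ has an empty prefix, $g = I$, and a single use of (3) on $A_w$ supplies the lone required sequence.

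The step I expect to be the crux is reconciling the two constraints that pull against each other: the permutation product of $\vec w$ is pinned to $w$, whereas the trace must be pushed above $2$. The resolution — and the only place all three hypotheses act together — is to insert a \emph{pure} $\beta_i$ (so the permutation is left untouched and a single correction entry $c$ suffices), spelled by \emph{positive} simple-braid generators (so it genuinely lies in the $P(w,N)$ framework), and to apply condition (3) not to $g$ but to the combined background matrix $g A_c$, which absorbs both the free prefix and the correction. Once one sees that the appeal to (3) must be made after folding $A_c$ into the background, the remaining verifications are the routine length and permutation counts carried out above.
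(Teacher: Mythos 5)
Your proposal is correct and follows essentially the same route as the paper's own proof: reduce to the trace condition via \Cref{cor:sl-2}, let a free prefix of length $N-L-1$ range over $S_3^{N-L-1}$, insert one correction entry to pin the permutation product to $w$, and then use hypothesis (3) on the accumulated matrix to choose a pure, positive $\beta_i$ (padded with identities to length $L$) as the suffix. Your version is if anything slightly more explicit than the paper's about the distinctness of the resulting sequences and the boundary case $N = L+1$, but the argument is the same.
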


\begin{proof}
By \Cref{cor:sl-2}, it suffices to give a lower bound on the number of $\vec{w} \in P(w, N)$ such that $|{\tr(\gamma(\sigma_{\vec{w}}))}| > 2$.

We have complete freedom to pick the first $N - L - 1$ entries of $\vec{w}$.
We pick the $(N - L)$th entry to ensure that the product of the first $N - L$ entries of $\vec{w}$ equals $w$.
By condition (3), there must be some $i$ such that 
\begin{align}
|{\tr(\gamma(\sigma_{w_1} \cdots \sigma_{w_N - L}) \gamma(\beta_i))}| > 2.
\end{align}
Using condition (1), we can write $\beta_i = \sigma_{w_{N - L + 1}} \cdots \sigma_{w_{N - L + k}}$ for some $k \leq L$ and $w_{N - L + 1}, \ldots, w_{N - L + k} \in S_3$.
For $j$ such that $k < j \leq L$, we set $w_{N - L + j} = 1$.
Finally, by condition (2), the product of all of the entries in the resulting tuple $\vec{w}$ equals $w$.
\end{proof}

\begin{lem}\label{lem:6}
In the setup of \Cref{lem:crazy}, it is possible to choose the braids $\beta_i \in \Br_3$ so that $k = 4$ and $L = 6$.
Explicitly,
\begin{align}
(\beta_i)_{i = 1}^4
	= (1,\, \sigma_1^2 \sigma_2^2,\, \sigma_2^2 \sigma_1^2,\, \sigma_1^4 \sigma_2^2).
\end{align}
 
\end{lem}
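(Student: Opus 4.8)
Since every $\beta_i$ is a word in $\sigma_1,\sigma_2$ with positive exponents, and each is a product of the pure braids $\sigma_1^2,\sigma_2^2$ (with $\beta_1 = 1$), conditions (1) and (2) hold at a glance; moreover $\ell(\sigma_1^4\sigma_2^2) = 6$ is the largest of the four writhes, so $k = 4$ and $L = 6$. The entire content is therefore condition (3), which I would establish by a direct computation in $\SL_2(\bb{Z})$.

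Normalizing the reduced Burau representation at $t = -1$ by
\begin{align}
\gamma(\sigma_1) = \begin{pmatrix} 1 & 1 \\ 0 & 1 \end{pmatrix}, \qquad \gamma(\sigma_2) = \begin{pmatrix} 1 & 0 \\ -1 & 1 \end{pmatrix},
\end{align}
I would multiply out to get $\gamma(\beta_1) = I$ together with
\begin{align}
\gamma(\beta_2) = \begin{pmatrix} -3 & 2 \\ -2 & 1 \end{pmatrix}, \qquad \gamma(\beta_3) = \begin{pmatrix} 1 & 2 \\ -2 & -3 \end{pmatrix}, \qquad \gamma(\beta_4) = \begin{pmatrix} -7 & 4 \\ -2 & 1 \end{pmatrix}.
\end{align}
Writing $g = \begin{pmatrix} a & b \\ c & d \end{pmatrix}$ with $ad - bc = 1$, each trace $\tr(g\gamma(\beta_i))$ becomes an explicit integer-linear form in $(a,b,c,d)$; for instance $T_1 \vcentcolon= \tr(g) = a + d$ and $T_2 \vcentcolon= \tr(g\gamma(\beta_2)) = -3a - 2b + 2c + d$. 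The goal is to show that $|T_i| \le 2$ for all $i$ is incompatible with $ad - bc = 1$ over the integers.

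The structural fact I would exploit is that the four forms $T_1,\dots,T_4$ are linearly independent over $\bb{Q}$: the determinant of their coefficient matrix equals $-32 \neq 0$. Consequently the locus $|T_i| \le 2$ $(i = 1,\dots,4)$ is a bounded parallelepiped in $(a,b,c,d)$-space and contains only finitely many integer points. Inverting the linear system expresses $a,b,c,d$ as rational combinations of the $T_i$ with denominator dividing $32$; feeding in $|T_i| \le 2$ then forces $a, d \in \{-1,0,1\}$ and confines $b, c$ to short intervals. It remains to run through this small list, retain only the tuples with $ad - bc = 1$ (the genuine elements of $\SL_2(\bb{Z})$), and check that each violates at least one inequality $|T_i| \le 2$.

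I expect this last, finite but delicate, case check to be the main obstacle. In $\SL_2(\bb{Z})$ the condition $|\tr g| \le 2$ singles out the parabolic, elliptic, and $\pm I$ elements, so the dangerous candidates are the low-complexity ones — the parabolics $\gamma(\sigma_i^{\pm 1})$ and the elliptics of trace $0, \pm 1$ — for which several of the $T_i$ land exactly on the boundary values $\pm 2$; defeating these boundary near-misses is precisely what the specific exponents (notably the $\sigma_1^4$ in $\beta_4$) must be tuned to accomplish. An economical way to package the verification is to note that, after the inversion above, $ad - bc$ is itself a fixed integral quadratic form $Q(T_1,\dots,T_4)$, with $64(ad - bc) = Q$, so that condition (3) reduces to the single assertion that $Q$ never takes the value $64$ at an admissible lattice point $(T_1,\dots,T_4) \in [-2,2]^4$.
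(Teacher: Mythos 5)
Your reduction of everything to condition (3), your choice of normalization, and your overall plan---turn each $\tr(g\gamma(\beta_i))$ into a linear form in the entries of $g$ and run a finite search over a bounded box---coincide with the paper's own proof. Moreover, your matrix computation is the correct one: with the stated normalization and with $\gamma$ a homomorphism (which is forced by the way condition (3) gets used in \Cref{lem:crazy}, namely $g\gamma(\beta_i) = \gamma(\beta\beta_i)$ for $g = \gamma(\beta)$), one has
\begin{align}
\gamma(\sigma_1^4\sigma_2^2)
	= \begin{pmatrix} 1 & 4 \\ 0 & 1 \end{pmatrix}
	\begin{pmatrix} 1 & 0 \\ -2 & 1 \end{pmatrix}
	= \begin{pmatrix} -7 & 4 \\ -2 & 1 \end{pmatrix},
\end{align}
whereas the paper's proof works with $\begin{pmatrix} 1 & -2 \\ 4 & -7 \end{pmatrix}$, which is not $\gamma(\sigma_1^4\sigma_2^2)$ under its own stated convention (nor under the anti-homomorphism convention, which would also scramble $\gamma(\beta_2)$ and $\gamma(\beta_3)$).

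The genuine gap is the step you defer: ``run through this small list and check that each violates at least one inequality.'' That check cannot succeed, because with the correct matrices condition (3) is \emph{false} for this list of braids. Take $g = \gamma(\sigma_2^{-2}) = \begin{pmatrix} 1 & 0 \\ 2 & 1 \end{pmatrix} \in \SL_2(\bb{Z})$. Then $g\gamma(\beta_i) = \gamma(\sigma_2^{-2}\beta_i)$, and $\sigma_2^{-2}\beta_i$ equals $\sigma_2^{-2}$ for $i = 1$, is conjugate to $\sigma_1^2$ for $i = 2$, equals $\sigma_1^2$ for $i = 3$, and is conjugate to $\sigma_1^4$ for $i = 4$; since trace is a class function and all of these are parabolic, all four traces equal exactly $2$. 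In your coordinates this is the point $(a, b, c, d) = (1, 0, 2, 1)$: it has $ad - bc = 1$, survives your constraint $a, d \in \{-1, 0, 1\}$, and gives $T_1 = T_2 = T_3 = T_4 = 2$. So the finite search you propose would \emph{find} a solution rather than rule one out. What your (correct) computation actually exposes is an error in the paper: its case analysis validly shows that the linear system it writes down has no solution in $\SL_2(\bb{Z})$, but that system does not come from the braid $\sigma_1^4\sigma_2^2$, so \Cref{lem:6} as stated is false. To complete a proof along these lines one must first repair the list---replace $\beta_4$ or enlarge the collection by positive pure braids whose genuine Burau images make the system infeasible (note that no family in which every $\sigma_2^{-2}\beta_i$ is conjugate to a power of a generator can work, since $\gamma(\sigma_2^{-2})$ defeats any such family)---and only then carry out the finite check.
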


\begin{proof}
Conditions (1)--(2) on the $\beta_i$ are immediate; it remains to check condition (3).
Without loss of generality, we can normalize the reduced Burau representation so that the homomorphism $\gamma$ in \Cref{cor:sl-2} takes the form
\begin{align}
\gamma(\sigma_1)
	= \pmat{1 &1 \\ 0 &1},
\quad\gamma(\sigma_2)
	= \pmat{1 &0 \\ -1 &1}.
\end{align}
We compute that
\begin{align}
(\gamma(\beta_i))_{i = 1}^4
	= \pa{
		\pmat{1&0\\ 0&1},
		\pmat{-3&2\\ -2&1},
		\pmat{1&2\\ -2&-3},
		\pmat{1&-2\\ 4&-7}
	}.
\end{align}
So we must show that there cannot exist $\psmat{a&b\\ c&d} \in \SL_2(\bb{Z})$ such that
\begin{align}
\label{eq:crazy1}
|a + d| &\leq 2,\\
\label{eq:crazy2}
|{-3}a - 2b + 2c + d|, |a - 2b + 2c - 3d| &\leq 2,\\
\label{eq:crazy4}
|a + 4b - 2c - 7d| &\leq 2.
\end{align}
(The structure of our argument will clarify why we group the inequalities in this way.)
In what follows, set $f = -2a + 2b$.
First, by \eqref{eq:crazy2},
\begin{align}
4|a - d| \leq |{-3}a + f + d| + |a + f - 3d| \leq 4,
\end{align}
from which we deduce
\begin{align}\label{eq:crazy5}
|a - d| \leq 1.
\end{align}
Next, by \eqref{eq:crazy1}, $2|a| \leq |a - d| + |a + d| \leq 3$, from which $a \in \{-1, 0, 1\}$.

If $a = 0$, then by \eqref{eq:crazy5}, $d \in \{-1, 0, 1\}$.
If $d = 0$, then \eqref{eq:crazy2} says $|f| \leq 2$, from which $-b + c \in \{-1, 0, 1\}$.
This contradicts the fact that $bc = ad - bc = 1$.
If $d = 1$, then \eqref{eq:crazy2} says $|f + 1|, |f - 3| \leq 2$.
This forces $f = 1$, contradicting the fact that $f \in 2\bb{Z}$.
The argument when $d = -1$ is similar, but with flipped signs.

If $a = 1$, then by \eqref{eq:crazy5}, $d \in \{0, 1\}$.
If $d = 0$, then the same argument as for $(a, d) = (0, 1)$ shows that $f = 1$, contradicting $f \in 2\bb{Z}$.
If $d = 1$, then \eqref{eq:crazy2} says $|f - 2| \leq 2$, from which $-b + c \in \{0, 1, 2\}$.
But also, $1 - bc = ad - bc = 1$, from which $bc = 0$.
Therefore, $(b, c) \in \{(0, 0), (0, 1), (0, 2), (-1, 0), (-2, 0)\}$, and each option contradicts \eqref{eq:crazy4}.

Finally, the argument when $a = -1$ is similar to that when $a = 1$, except that in the $d = -1$ subcase, the options for $(b, c)$ have flipped signs in both entries, so we can again conclude using \eqref{eq:crazy4}.
\end{proof}

%In the notation of \Cref{lem:crazy}, consider the list of braids
%\begin{align}
%(\beta_1,\, \beta_2,\, \beta_3,\, \beta_4)
%	\,=\, (1,\, \sigma_1^4 \sigma_2^4,\, \sigma_2^4 \sigma_1^4,\, \sigma_1^6 \sigma_2^8 \sigma_1^4 \sigma_2^2).
%\end{align}
%We can check that $L = 20$, that
%\begin{align}
%&(\gamma(\beta_1), \gamma(\beta_2), \gamma(\beta_3), \gamma(\beta_4))\\
%	&\qquad= \pa{
%		\pmat{1 &0 \\ 0 &1},
%		\pmat{-15 &4 \\ -4 &1},
%		\pmat{1 &4 \\ -4 &-15},
%		\pmat{317 &-182 \\ 54 &-31}	
%	},
%\end{align}
%and that conditions (1)--(3) of the lemma hold.

\subsection{}

Next we explain how, using the sets $P(w, N)$ above, we can pass from simple braids on many strands to non-simple braids on $3$ strands that have equal or lower entropy.
For any integer $N > 0$, let
\begin{align}
C_N = \{w \in S_{3N} \mid \text{$w$ is a single cycle of length $3N$}\}.
\end{align}
We define a map
\begin{align}\label{eq:psi}
\vec{p} : C_N \to P(s_1s_2, N) \sqcup P(s_2s_1, N)
\end{align}
as follows.
First, for any $c \in C_N$ and residue class $i$ mod $3N$, let $a_i = a_i(c)$ be the image of $1$ under $c^i$, where we view $c^i$ as a permutation of $\{1, 2, \ldots, 3N\}$.
In other words, $c$ is the cycle $1 = a_0 \mapsto a_1 \mapsto \cdots \mapsto a_{3N} = 1$.
We define a permutation of $\{1, 2, 3\}$ in three stages:
\begin{enumerate}
\item 	A bijection $\{1, 2, 3\} \xrightarrow{\sim} \{a_{i - 1}, a_{i - 1 + N}, a_{i - 1 + 2N}\}$ sending $1$ to the smallest element of the target, $2$ to the next-smallest, and $3$ to the largest.

\item 	A bijection $\{a_{i - 1}, a_{i - 1 + N}, a_{i - 1 + 2N}\} \xrightarrow{\sim} \{a_i, a_{i + N}, a_{i + 2N}\}$ sending $a_k \mapsto a_{k + 1}$ for all $k$.

\item 	A bijection $\{a_i, a_{i + N}, a_{i + 2N}\} \xrightarrow{\sim} \{1, 2, 3\}$ sending the smallest element of the domain to $1$, the next-smallest to $2$, and the largest to $3$.

\end{enumerate}
For $i = 1, 2, \ldots, N$, let $p_i = p_i(c) \in S_3$ be the permutation of $\{1, 2, 3\}$ resulting from the construction above.

\begin{lem}\label{lem:fiber}
For all $c \in C_N$, the product $w_1(c) \cdots w_N(c)$ is a $3$-cycle, so \eqref{eq:psi} can be defined using $\vec{p}(c) \vcentcolon= (p_1(c), \ldots, p_N(c))$.
Moreover,
\begin{align}
|\{c \in C_N \mid \vec{p}(c) = \vec{w}\}| = \frac{|C_N|}{2 \cdot 6^{N - 1}}
\end{align}
for all $\vec{w} \in P(s_1s_2, N) \sqcup P(s_2s_1, N)$.
That is, the fibers of $\vec{p}$ are equinumerous.
\end{lem}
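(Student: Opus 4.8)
The plan is to recast a single $3N$-cycle $c \in C_N$ as a filling of an $N \times 3$ grid. Writing each index $t \in \{0, 1, \ldots, 3N-1\}$ as $t = i + jN$ with $i \in \{0,\ldots,N-1\}$ (the column) and $j \in \{0,1,2\}$ (the row), the assignment $(i,j) \mapsto a_{i+jN}$ is a bijection from the grid to $\{1, \ldots, 3N\}$ sending the top-left cell $(0,0)$ to $a_0 = 1$. Conversely, every such bijection arises this way: the successor map $a_t \mapsto a_{t+1}$ is conjugate, via the filling, to the cyclic shift of indices, which is a single $3N$-cycle, so \emph{any} filling with $1$ in the top-left cell produces an element of $C_N$. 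Thus $C_N$ is identified with the set of these fillings, of which there are $(3N-1)!$. Under this dictionary, the triple $T_i = \{a_i, a_{i+N}, a_{i+2N}\}$ is exactly column $i$, and the successor map carries column $i-1$ to column $i$ preserving rows when $1 \le i \le N-1$, while it carries column $N-1$ to column $0$ shifting rows by $+1$ modulo $3$.

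Next I would record, for each column $i$, only its internal order pattern: let $\psi_i \in S_3$ be the bijection $\{0,1,2\} \to \{1,2,3\}$ sending a row to the rank of its entry within column $i$. This $\psi_i$ is exactly the sorting bijection of step (3) of the construction, read in row coordinates, and the un-sorting map of step (1) for $p_i$ is $\psi_{i-1}^{-1}$. Because the successor map is row-preserving between consecutive columns except at the wrap-around, the three-stage construction collapses to
\[
p_i = \psi_i \psi_{i-1}^{-1} \quad (1 \le i \le N-1), \qquad p_N = \psi_0\, \theta\, \psi_{N-1}^{-1},
\]
where $\theta = (0\,1\,2)$ is the cyclic row shift. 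In particular $\vec{p}(c)$ depends only on the tuple of order patterns $(\psi_0, \ldots, \psi_{N-1})$. Telescoping the composite yields the first assertion of the lemma at once: the product of the permutations $p_i(c)$ equals $\psi_0 \theta \psi_0^{-1}$, a conjugate of the $3$-cycle $\theta$, hence a $3$-cycle, so $\vec{p}(c)$ indeed lands in $P(s_1 s_2, N) \sqcup P(s_2 s_1, N)$.

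To count the fiber over a fixed $\vec{w} = (p_1, \ldots, p_N)$ in the target, I would first solve for the admissible order-pattern tuples. The relations above determine $\psi_i = (p_i \cdots p_1)\psi_0$ for all $i$ from the single choice of $\psi_0$, subject to the closing constraint $\psi_0 \theta \psi_0^{-1} = p_N \cdots p_1$. Since the right-hand side is a fixed $3$-cycle, the conjugating elements $\psi_0$ range over a coset of the centralizer $\langle \theta \rangle$, giving exactly three solutions $\psi_0, \psi_0 \theta, \psi_0 \theta^2$; their values at row $0$ are $\psi_0(0), \psi_0(1), \psi_0(2)$, so exactly one of the three satisfies $\psi_0(0) = 1$. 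Only this one is compatible with the normalization $a_0 = 1$, since the top-left entry must be the minimum of its column. Hence the fiber over $\vec{w}$ is precisely the set of fillings realizing one specified tuple of column patterns and having the value $1$ in the top-left cell.

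The crux is then a counting identity that turns out to be independent of $\vec{w}$. The number of fillings of the grid with a prescribed internal order in every column is $(3N)!/6^N$, obtained by distributing the values among columns ($\tfrac{(3N)!}{(3!)^N}$ ways) and noting that each column's arrangement is then forced. Among these, I would count those in which the value $1$ lands in column $0$: choosing the two other entries of column $0$ and distributing the rest gives $\binom{3N-1}{2}\tfrac{(3N-3)!}{6^{N-1}} = \tfrac{(3N-1)!}{2 \cdot 6^{N-1}}$, which is manifestly independent of the prescribed patterns. Since the condition $\psi_0(0) = 1$ places value $1$ at the top-left cell exactly when it lies in column $0$, this is the fiber size, namely $\tfrac{(3N-1)!}{2 \cdot 6^{N-1}} = |C_N|/(2 \cdot 6^{N-1})$, for every $\vec{w}$. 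I expect the main obstacle to be packaging this reduction cleanly — verifying that $\vec{p}$ genuinely factors through the column order patterns with the stated wrap-around formula, and that the $a_0 = 1$ normalization selects exactly one of the three admissible pattern tuples — after which the equidistribution of the global minimum among the columns makes the fiber count uniform.
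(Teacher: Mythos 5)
Your proof is correct, and it rests on the same underlying decomposition as the paper's: the $N$ triples $\{a_{i-1}, a_{i-1+N}, a_{i-1+2N}\}$ (your columns), together with the observation that $\vec{w}$ and the normalization $a_0 = 1$ determine how the entries of each triple are assigned to positions. Where you differ is in how the equinumerosity is concluded. The paper never computes a fiber directly: for each $\vec{w}$ it constructs an \emph{injection} from the set of ordered tuples $(A_1, \ldots, A_N)$ of $3$-sets partitioning $\{1, \ldots, 3N\}$ with $1 \in A_1$ into the fiber over $\vec{w}$ (building the cycle inductively, column by column), and then concludes that all $2 \cdot 6^{N-1}$ fibers have size exactly $|C_N|/(2 \cdot 6^{N-1})$ by a pigeonhole count, since the fibers partition $C_N$. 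You instead compute each fiber exactly and on its own: you factor $\vec{p}$ through the tuple of column order patterns $(\psi_0, \ldots, \psi_{N-1})$ via the identities $p_i = \psi_i \psi_{i-1}^{-1}$ and $p_N = \psi_0 \theta \psi_{N-1}^{-1}$, solve the conjugacy equation $\psi_0 \theta \psi_0^{-1} = p_N \cdots p_1$ (three solutions, exactly one of which has $\psi_0(0) = 1$ and is thus compatible with $a_0 = 1$), and count the fillings realizing the unique admissible pattern tuple, getting $\binom{3N-1}{2}\frac{(3N-3)!}{6^{N-1}} = \frac{(3N-1)!}{2 \cdot 6^{N-1}}$ directly. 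Your telescoping identity also yields the first assertion (the product is $\psi_0\theta\psi_0^{-1}$, visibly a $3$-cycle) more systematically than the paper's case analysis on whether $a_N < a_{2N}$. The trade-off: the paper's argument is shorter because injectivity plus global counting is all it needs, while yours dispenses with the pigeonhole, gives an explicit description of every fiber, and makes the factorization of $\vec{p}$ through order patterns — which is only implicit in the paper's induction — into a reusable statement.
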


\begin{proof}
In the notation of the discussion above, the product $w = w_1(c) \cdots w_N(c)$ is the permutation of $\{1, 2, 3\}$ defined in stages by:
\begin{enumerate}
\item 	A bijection $\{1, 2, 3\} \xrightarrow{\sim} \{1, a_N, a_{2N}\}$ sending $1$ to $1$, $2$ to the next-smallest element, and $3$ to the largest.

\item 	A permutation of $\{1 = a_0, a_N, a_{2N}\}$ sending $a_k \mapsto a_{k + N}$ for all $k$.

\item 	A bijection $\{1, a_N, a_{2N}\} \xrightarrow{\sim} \{1, 2, 3\}$ sending $1$ to $1$, the next-smallest element to $2$, and the largest to $3$.

\end{enumerate}
We deduce that $w$ is the $3$-cycle that sends $1 \mapsto 2$, \emph{resp.}\ $1 \mapsto 3$, when $a_N < a_{2N}$, \emph{resp.}\ $a_{2N} < a_N$.
This proves the first assertion.

Next, observe that there are $\frac{|C_N|}{2 \cdot 6^{N - 1}}$ ways to form an ordered $N$-tuple $\vec{A} = (A_1, \ldots, A_N)$ of sets of size $3$ such that their union is $\{1, \ldots, 3N\}$ and $A_1 \ni 1$.
For any $\vec{w} = (w_1, \ldots, w_N) \in P(s_1s_2, N) \sqcup P(s_2s_1, N)$, we claim that there is an injective map from the set of such tuples into the set of cycles $c \in C_N$ for which $\vec{p}(c) = \vec{w}$.
Since the number of possible $\vec{w}$, \emph{resp.}\ $c$, is $2 \cdot 6^{N - 1}$, \emph{resp.}\ $|C_N|$, this will prove the second assertion.

It suffices to show that $\vec{w}$ determines a way of assigning the elements of $A_i$ bijectively to variables $a_{i - 1}, a_{i - 1 + N}, a_{i - 1 + 2N}$ for every $i$, such that $a_0 = 1$.
We use induction:
If $A_1 = \{1, a, b\}$, then the $3$-cycle $w_1 \cdots w_N$ determines whether we assign $(a_N, a_{2N})$ to be $(a, b)$ or $(b, a)$.
In general, once we have assigned the elements of $A_i$, the permutation $w_i$ determines how we assign the elements of $A_{i + 1}$.
\end{proof}

\begin{lem}\label{lem:cycle-entropy}
For all $c \in C_N$, we have 
\begin{align}
h(\sigma_c) \geq \frac{1}{N} h(\sigma_{\vec{p}(c)}).
\end{align}
\end{lem}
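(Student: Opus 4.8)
The plan is to combine the two abstract properties of entropy recorded in \Cref{lem:akm} with the strand-deletion bound \eqref{eq:stable}. Concretely, I would deduce the claim from the chain
\[
h(\sigma_c) \;\geq\; \tfrac{1}{N}\, h(\sigma_c^N) \;\geq\; \tfrac{1}{N}\, h\bigl(\sigma_c^N|_T\bigr) \;=\; \tfrac{1}{N}\, h(\sigma_{\vec{p}(c)}),
\]
where $T \subseteq \{1, \ldots, 3N\}$ is a suitable three-element subset. The first inequality is \Cref{lem:akm}(2) applied to $\phi = \sigma_c$ with $k = N$; the second is \eqref{eq:stable}, once we verify that $T$ is stable under $\sigma_c^N$; and the final equality is the heart of the matter, identifying the restriction $\sigma_c^N|_T$ with the $3$-strand braid $\sigma_{\vec{p}(c)}$.

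For the stable subset I would use the triples from the definition of $\vec{p}$. Set $T_j = \{a_{j-1}, a_{j-1+N}, a_{j-1+2N}\}$ for $j = 1, \ldots, N$ and $T = T_1 = \{1, a_N, a_{2N}\}$. The $T_j$ partition $\{1, \ldots, 3N\}$, and since $c$ is a single $3N$-cycle we have $c(a_k) = a_{k+1}$, so $c$ permutes the triples cyclically as $T_j \mapsto T_{j+1}$ (indices mod $N$). Consequently $c^N$ sends $a_k \mapsto a_{k+N}$ and acts on $T$ as the $3$-cycle $a_0 \mapsto a_N \mapsto a_{2N} \mapsto a_0$; in particular $T$ is a union of cycles of $c^N$, hence stable under $\sigma_c^N$, and \eqref{eq:stable} applies. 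The cyclic action $T_j \mapsto T_{j+1}$ is precisely what will let us track three strands cleanly through the $N$ stacked copies of $\sigma_c$ that constitute $\sigma_c^N$.

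The main work—and the step I expect to be the genuine obstacle, since the factors $\sigma_c$ individually do \emph{not} stabilize $T$—is the final equality. I would draw $\sigma_c^N$ as a vertical stack of $N$ copies of the permutation-braid diagram of $\sigma_c$ and then delete every strand outside $T$; by definition this produces $\sigma_c^N|_T$. The three surviving strands enter the $j$-th copy occupying the positions $T_j$ and leave occupying $T_{j+1}$. Two structural facts then pin down each copy: deleting strands from a simple braid yields a simple braid (any two remaining strands still cross at most once, and positively), and a simple braid is determined by its underlying permutation, since $w \mapsto \sigma_w$ is a bijection $S_n \xrightarrow{\sim} E_n$. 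Hence the restriction of the $j$-th copy is the simple $3$-braid attached to the permutation of $\{1,2,3\}$ that $c$ induces on $T_j \to T_{j+1}$ under the order-preserving relabelings of source and target—which is exactly $p_j$ by its three-stage definition. Because the order-preserving relabeling of $T_{j+1}$ is shared between the top of copy $j$ and the bottom of copy $j+1$, the restricted copies concatenate into a bona fide $3$-braid, namely $\sigma_{p_1} \sigma_{p_2} \cdots \sigma_{p_N} = \sigma_{\vec{p}(c)}$. Substituting this into the displayed chain gives $h(\sigma_c) \geq \frac{1}{N} h(\sigma_{\vec{p}(c)})$, completing the argument.
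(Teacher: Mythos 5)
Your proposal is correct and follows essentially the same route as the paper's proof: the chain $N h(\sigma_c) \geq h(\sigma_c^N) \geq h(\sigma_c^N|_{I'}) = h(\sigma_{\vec{p}(c)})$ with $I' = \{1, a_N, a_{2N}\}$ stable under $\sigma_c^N$, justified by \Cref{lem:akm} and \eqref{eq:stable}. The only difference is one of detail: the paper merely asserts that $\sigma_c^N|_{I'}$ "can be identified with $\sigma_{\vec{p}(c)}$ up to conjugacy" and absorbs the ambiguity via conjugacy-invariance of entropy, whereas you make the identification exact by tracking the three strands through the $N$ stacked copies of the permutation braid $\sigma_c$ with consistent order-preserving relabelings at each interface, using that strand deletion preserves simplicity and that simple braids are determined by their permutations---a sound and slightly more self-contained elaboration of the step the paper leaves implicit.
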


\begin{proof}
We use the setup and language of \Cref{sec:entropy}.
Let $I \subseteq D$ be a finite set of $3N$ points, and order them from $1$ to $3N$, so that we can identify $\Br_{3N}$ with $\Br_I$.
If $c$ is the $3N$-cycle $1 = a_0 \mapsto a_1 \mapsto \cdots \mapsto a_{3N} = 1$, then $c^N$ contains the $3$-cycle $1 \mapsto a_N \mapsto a_{2N} \mapsto 1$.
Thus $I' \vcentcolon= \{1, a_N, a_{2N}\}$ is stable under $\sigma_c^N$.
In fact, if we identify $\Br_{I'}$ with $\Br_3$ via some ordering, then $\sigma_c^N|_{I'}$ can be identified with $\sigma_{\vec{p}(c)}$ up to conjugacy.
Now, 
\begin{align}
N h(\sigma_c) \geq h(\sigma_c^N) \geq h(\sigma_c^N|_{I'}) = h(\sigma_{\vec{p}(c)})
\end{align}
by \Cref{lem:akm} and display \eqref{eq:stable}.
\end{proof}

\subsection{}

%In what follows, let $L = 6$, as in \Cref{lem:6}.
Combining \Crefrange{lem:crazy}{lem:cycle-entropy} gives:

\begin{lem}\label{lem:main}
For any $N \geq 7$, we have 
\begin{align}
\frac{|\{c \in C_N \mid h(\sigma_c) > 0\}|}{|C_N|}
	\geq 6^{-6}.
\end{align}
\end{lem}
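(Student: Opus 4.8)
The statement combines three prior lemmas, so the plan is to chain them together while tracking the relevant counting. The quantity we want to bound below is the proportion of single cycles $c \in C_N$ whose simple braid $\sigma_c$ has positive entropy. The natural strategy is: first use \Cref{lem:cycle-entropy} to transfer positivity of entropy from the $3$-strand braid $\sigma_{\vec{p}(c)}$ up to $\sigma_c$; then use the fiber-counting of \Cref{lem:fiber} to convert a count over tuples $\vec{w}$ into a count over cycles $c$; and finally plug in the explicit lower bound of \Cref{lem:crazy} applied to the concrete braids furnished by \Cref{lem:6}.

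\begin{proof}
By \Cref{lem:cycle-entropy}, if $h(\sigma_{\vec{p}(c)}) > 0$ then $h(\sigma_c) > 0$, since $N > 0$. Therefore
\begin{align}
|\{c \in C_N \mid h(\sigma_c) > 0\}|
	\geq |\{c \in C_N \mid h(\sigma_{\vec{p}(c)}) > 0\}|.
\end{align}
By \Cref{lem:fiber}, the fibers of $\vec{p}$ are equinumerous, each of size $\frac{|C_N|}{2 \cdot 6^{N - 1}}$, so the right-hand side equals
\begin{align}
\frac{|C_N|}{2 \cdot 6^{N - 1}} \cdot \bigl|\{\vec{w} \in P(s_1s_2, N) \sqcup P(s_2s_1, N) \mid h(\sigma_{\vec{w}}) > 0\}\bigr|.
\end{align}
It remains to bound the number of good tuples $\vec{w}$ from below. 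By \Cref{lem:6}, the braids in \Cref{lem:crazy} may be chosen with $L = 6$, and for each of $w = s_1 s_2$ and $w = s_2 s_1$, \Cref{lem:crazy} gives at least $6^{N - L - 1} = 6^{N - 7}$ tuples $\vec{w} \in P(w, N)$ with $h(\sigma_{\vec{w}}) > 0$, valid whenever $N \geq L + 1 = 7$. Summing over the two disjoint sets yields at least $2 \cdot 6^{N - 7}$ good tuples. Substituting,
\begin{align}
\frac{|\{c \in C_N \mid h(\sigma_c) > 0\}|}{|C_N|}
	\geq \frac{1}{2 \cdot 6^{N - 1}} \cdot 2 \cdot 6^{N - 7}
	= 6^{-6},
\end{align}
as claimed.
\end{proof}

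\noindent\emph{Remark on the main obstacle.} The conceptual work has all been front-loaded into the earlier lemmas, so the proof of \Cref{lem:main} is essentially bookkeeping; the one point that requires care is checking that the hypotheses align. Specifically, one must confirm that \Cref{lem:crazy} applies separately to each of the two target permutations $s_1 s_2$ and $s_2 s_1$ with the \emph{same} list of braces $\beta_i$ from \Cref{lem:6} — condition (3) there is about the existence of a universal $g \in \SL_2(\bb{Z})$ and is independent of the target $w$, so the bound $6^{N-7}$ genuinely holds for both summands. The factor of $2$ from the two-component disjoint union $P(s_1 s_2, N) \sqcup P(s_2 s_1, N)$ must then cancel precisely against the factor of $2$ in the fiber size of \Cref{lem:fiber}, which is the reason the clean constant $6^{-6}$ emerges.
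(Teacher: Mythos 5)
Your proof is correct and is precisely the argument the paper intends: the paper offers no written proof of \Cref{lem:main} beyond the phrase ``combining \Cref{lem:crazy}--\Cref{lem:cycle-entropy} gives,'' and your chaining --- entropy transfer via \Cref{lem:cycle-entropy}, fiber-counting via \Cref{lem:fiber}, and the bound $2 \cdot 6^{N-7}$ good tuples from \Cref{lem:crazy} with the $L = 6$ braids of \Cref{lem:6} --- is exactly that combination, with the arithmetic $\frac{1}{2 \cdot 6^{N-1}} \cdot 2 \cdot 6^{N-7} = 6^{-6}$ checking out. Your remark that condition (3) of \Cref{lem:crazy} is independent of the target permutation $w$, so the same braids work for both $s_1 s_2$ and $s_2 s_1$, is also the right point to flag.
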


For any $n$ and $w \in S_n$, we will call a cycle of $w$ \dfemph{relevant} if its length is divisible by $3$ and at least $3 \cdot 7 = 21$, and \dfemph{irrelevant} otherwise.
We apply the same name to the corresponding orbit, \emph{i.e.}, to the underlying unordered subset of $\{1, \ldots, n\}$.
We define an equivalence relation on $S_n$ as follows:
$w \approx w'$ means that $w$ and $w'$ have the same irrelevant cycles and the same relevant orbits.

We arrive at the following result, reducing the proof of \Cref{thm:main} to exhibiting sufficiently many elements of $S_n$ with sufficiently many relevant cycles.

\begin{prop}\label{prop:main}
Let $D \subseteq S_n$ be an equivalence class for the relation $\approx$ in which the elements each have $r$ relevant cycles.
Then 
\begin{align}
\frac{|\{w \in D \mid h(\sigma_w) > 0\}|}{|D|} \geq 1 - (1 - 6^{-6})^r.
\end{align}
\end{prop}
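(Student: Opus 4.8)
The plan is to reduce the statement to Lemma~\ref{lem:main} by analyzing the internal structure of the equivalence class $D$. Fix $w_0 \in D$, and let $R_1, \ldots, R_r \subseteq \{1, \ldots, n\}$ be the relevant orbits of $w_0$, each of size $3N_j$ for some $N_j \geq 7$; by definition of $\approx$, these orbits (as unordered subsets) and the entire irrelevant part of $w_0$ are fixed across all of $D$. Thus an element $w \in D$ is precisely a choice, independently for each $j$, of a single $3N_j$-cycle on the fixed ground set $R_j$, with the irrelevant cycles held constant. Writing $C_{N_j}(R_j)$ for the set of $3N_j$-cycles supported on $R_j$, I would first establish the product decomposition
\begin{align}\label{eq:decomp}
D \cong \prod_{j=1}^r C_{N_j}(R_j),
\end{align}
so that $|D| = \prod_j |C_{N_j}(R_j)|$ and each $w \in D$ is determined by its tuple $(c_1, \ldots, c_r)$ of cyclic restrictions together with the common irrelevant data.

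The key step is to show that a single relevant cycle with positive entropy forces positive entropy for the whole braid $\sigma_w$. Concretely, I claim: if $c_j \in C_{N_j}(R_j)$ satisfies $h(\sigma_{c_j}) > 0$ for at least one index $j$, then $h(\sigma_w) > 0$. This follows from the strand-deletion inequality \eqref{eq:stable}: the subset $R_j$ is stable under $\sigma_w$ (because $R_j$ is a union of cycles of $w$, indeed a single cycle), and deleting all strands outside $R_j$ yields $\sigma_w|_{R_j} = \sigma_{c_j}$ up to the reindexing implicit in identifying $\Br_{R_j}$ with $\Br_{3N_j}$. Hence $h(\sigma_w) \geq h(\sigma_w|_{R_j}) = h(\sigma_{c_j}) > 0$. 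I would note that this uses only monotonicity under strand deletion and requires no interaction between the distinct relevant orbits.

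Given this, the complement $\{w \in D \mid h(\sigma_w) = 0\}$ is contained in the set of tuples for which \emph{every} relevant restriction has zero entropy, i.e. $c_j \in C_{N_j}(R_j) \setminus \{h(\sigma_{c_j}) > 0\}$ for all $j$ simultaneously. Using the product structure \eqref{eq:decomp} and the fact that $|C_{N_j}(R_j)| = |C_{N_j}|$ depends only on $N_j$, Lemma~\ref{lem:main} gives that the proportion of zero-entropy cycles in each factor is at most $1 - 6^{-6}$, since each $N_j \geq 7$. Multiplying across the $r$ independent factors,
\begin{align}
\frac{|\{w \in D \mid h(\sigma_w) = 0\}|}{|D|} \leq \prod_{j=1}^r \left(1 - 6^{-6}\right) = \left(1 - 6^{-6}\right)^r,
\end{align}
and taking complements yields the claimed bound.

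The main obstacle I anticipate is verifying \eqref{eq:decomp} cleanly—specifically, confirming that $\approx$ really does decouple the relevant orbits into an unconstrained product, and that the strand-deletion identity $\sigma_w|_{R_j} = \sigma_{c_j}$ holds on the nose (up to conjugacy, which suffices by Lemma~\ref{lem:akm}(1)) for the \emph{simple} braid $\sigma_w$ rather than only for an arbitrary positive lift. The subtlety is that $\sigma_w$ is the particular Garside-normal-form lift, so I would want to check that restricting the permutation to an invariant subset commutes with the section $w \mapsto \sigma_w$ in the appropriate sense; once that compatibility is in hand, the rest is bookkeeping with the multiplicativity of the independent cyclic choices.
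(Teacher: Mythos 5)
Your proposal is correct and follows essentially the same route as the paper's own proof: the same product decomposition of $D$ over the relevant orbits, the same application of the strand-deletion inequality \eqref{eq:stable} to reduce positivity of $h(\sigma_w)$ to positivity on a single relevant cycle, and the same factor-by-factor application of \Cref{lem:main} followed by complementation. The compatibility issue you flag at the end is handled in the paper exactly as you suggest, by fixing an ordering of each orbit and invoking conjugation-invariance of entropy (\Cref{lem:akm}(1)).
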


\begin{proof}
Let $\cal{O}$ be the collection of relevant orbits arising from elements of $D$.
By restricting any element of $D$ to its behavior on these orbits, we get a bijection
\begin{align}
D \xrightarrow{\sim} \{(c_O)_{O \in \cal{O}} \mid \text{$c_O$ is an $|O|$-cycle with underlying orbit $O$}\}.
\end{align}
Moreover, if $w \mapsto (c_O)_O$, then $h(\sigma_w) \geq \max_O h(\sigma_{c_O(w)})$ by \eqref{eq:stable}.
So it remains to bound the proportion of tuples $(c_O)_O$ that have $h(\sigma_{c_O}) = 0$ for all $O$.

For any $O \in \cal{O}$, we must have $|O| = 3N$ for some $N \geq 7$.
Fix an ordering of the elements of $O$, so that we can identify the possibilities for $c_O$ with elements $c \in C_N$.
Then, by \Cref{lem:akm}(1) and \Cref{lem:main}, the proportion of possibilities for $c_O$ with $h(\sigma_{c_O}) = 0$ is at most $1 - 6^{-6}$.
Applying this argument to each of the $r$ relevant orbits, we see that the proportion of tuples $(c_O)_O$ that have $h(\sigma_{c_O}) = 0$ for all $O$ is at most $(1 - 6^{-6})^r$.
\end{proof}

\section{Permutations with Many Long Cycles}\label{sec:cycles}

\subsection{}

For any integers $n, \ell, r > 0$, let
\begin{align}
S_n(\ell, r) = \left\{w \in S_n\, \middle| 
	\begin{array}{l}
		\text{$w$ has at least $r$ cycles of length divisible by $3$}\\
		\text{and length at least $3\ell$}
	\end{array}
	\right\}.
\end{align}
The goal of this section is to prove:

\begin{prop}\label{prop:cycles}
For fixed $\ell, r > 0$ and $n \gg_{\ell, r} 0$, we have
\begin{align}
\frac{|S_n(\ell, r)|}{|S_n|}
	= 1 - o\pa{r\pa{\frac{n}{3\ell \cdot 2^r}}^{-\frac{1}{6\ell \cdot 2^r}}},
\end{align}
where the little-o constant is independent of $\ell, r$.
\end{prop}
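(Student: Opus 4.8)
The plan is to show that a uniformly random permutation of $S_n$ has, with high probability, at least $r$ cycles whose length is a multiple of $3$ and at least $3\ell$. Since failure means having fewer than $r$ such "long-divisible-by-3" cycles, I would bound the probability of the complementary event and show it decays at the stated rate. The natural tool is the classical description of cycle counts in random permutations: the number of cycles of length exactly $k$ is asymptotically Poisson with mean $1/k$, and more usefully, the exact probability that a random permutation of $S_n$ has no cycle of length in a prescribed set $L \subseteq \{1,\dots,n\}$ is well controlled. Here the relevant set is $L = \{k : 3 \mid k,\ k \geq 3\ell\} = \{3\ell, 3\ell+3, 3\ell+6, \dots\}$, an arithmetic progression of density $\tfrac{1}{3}$ among integers $\geq 3\ell$.

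First I would reduce the problem to a "no long cycle" estimate. If $X$ counts the cycles of $w$ with length in $L$, then $|S_n(\ell,r)|/|S_n| = \Pr[X \geq r]$, and I want $\Pr[X < r]$ small. The cleanest route is a union-bound-free argument via the generating-function / inclusion-exclusion formula for the proportion $q_n(L)$ of permutations with \emph{no} cycle of length in $L$; but to handle the threshold $r$ rather than $1$, I would instead directly estimate $\Pr[X \leq r-1]$. A convenient device is to split the $n$ points into $r$ disjoint blocks and argue that each block independently has a decent chance of containing a long cycle, so that the chance all $r$ blocks fail is a product. Concretely, the exponent $\tfrac{1}{6\ell \cdot 2^r}$ and the factor $2^r$ strongly suggest the following scheme: I would look for cycles of length in a \emph{single} target window, say cycles of length exactly $m$ for a cleverly chosen $m \in L$, and use the fact that the probability a random permutation of $S_n$ has at least $r$ cycles of a fixed length $m$ (or of length in a narrow family) can be computed exactly via the formula for the number of permutations with a prescribed number of $m$-cycles.

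The key computation is the standard one: the number of $w \in S_n$ with exactly $j$ cycles of length $m$ is $\tfrac{n!}{m^j\, j!} \cdot (\text{proportion with no $m$-cycle among the remaining } n - jm \text{ points})$, and the proportion of $S_n$ having \emph{no} cycle of length $m$ is $\exp(-\tfrac1m) + o(1)$ for $n \gg m$, with an explicit error. Thus $\Pr[\text{at least } r \text{ cycles of length } m]$ is roughly $1 - \sum_{j<r} \tfrac{1}{m^j j!} e^{-1/m}$, which for $m$ large is dominated by the $j = 0, \dots, r-1$ Poisson tail. To turn this into the stated bound I would choose $m$ on the order of a power of $n$: taking $m \approx (n/(3\ell \cdot 2^r))^{1/(6\ell \cdot 2^r)}$ makes the "no long cycle" probability behave like $1 - cm^{-1}$ per block, and raising to the $r$th power (the source of the $2^r$ and the factor of $r$ out front) produces the claimed $o\!\left(r\,(n/(3\ell\cdot 2^r))^{-1/(6\ell\cdot 2^r)}\right)$. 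I would verify that $m$ can be chosen in $L$ (a multiple of $3$, at least $3\ell$) and that $rm \ll n$ so the asymptotic no-cycle estimate applies on the residual points.

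\textbf{The main obstacle} is calibrating the parameters so that the error term comes out in exactly the stated form with a constant independent of $\ell$ and $r$. The shape of the answer — the peculiar exponent $\tfrac{1}{6\ell \cdot 2^r}$ and the prefactors — is not what a naive single application of the Poisson/Goncharov estimate gives; it reflects a specific, somewhat lossy chain of inequalities (likely dividing $\{1,\dots,n\}$ into $2^r$-ish pieces and demanding a long cycle in each, or iterating a "halving" argument $r$ times). Getting these bookkeeping constants to match, while keeping the little-o constant uniform in $\ell$ and $r$, is the delicate part; the probabilistic input (Poisson approximation for short-to-moderate cycle counts, plus the exact formula for fixed-length cycle statistics) is standard and should present no conceptual difficulty. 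I would therefore spend most of the effort on the quantitative tuning of $m$ and the block decomposition rather than on the underlying random-permutation facts.
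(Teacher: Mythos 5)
There is a genuine gap, and it is fatal to the proposal as written. Your key quantitative step goes in the wrong direction: for a single target length $m$, the expected number of $m$-cycles in a uniform element of $S_n$ is exactly $1/m$, so by Markov's inequality the probability of having even \emph{one} cycle of length exactly $m$ is at most $1/m$. With your choice $m \approx (n/(3\ell \cdot 2^r))^{1/(6\ell \cdot 2^r)} \to \infty$, the event ``at least $r$ cycles of length $m$'' therefore has probability tending to $0$, whereas you need an event of probability tending to $1$; relatedly, the quantity $1 - e^{-1/m}\sum_{j < r} m^{-j}/j!$ is close to $0$ for large $m$, not close to $1$. The auxiliary device of splitting the $n$ \emph{points} into $r$ disjoint blocks also cannot be made to work: a uniform permutation of $S_n$ does not induce permutations of the blocks, its cycles typically meet many blocks, and there is no product structure to exploit. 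For the count of usable cycles to exceed $r$ with high probability, the set of admissible lengths must have divergent reciprocal sum (e.g.\ all multiples of $3$ in $[3\ell, n]$, whose reciprocal sum is $\sim \tfrac{1}{3}\log n$); any single length, or any window of bounded size around a growing $m$, fails this test.

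The paper repairs exactly this point by partitioning \emph{lengths}, not points. Setting $i_0 = \lceil \log_2 \ell \rceil$, it considers, for $r$ consecutive values of $i$ starting at $i_0 + 1$, the class of lengths $j \cdot 3 \cdot 2^i$ with $j$ odd. Every such length is divisible by $3$ and at least $3 \cdot 2^{i_0 + 1} \geq 3\ell$, and --- this is the trick you are missing --- cycles drawn from different classes are automatically distinct because their lengths have different $2$-adic valuations, so one cycle from each class already produces $r$ distinct relevant cycles. Each class has divergent reciprocal sum ($\approx \tfrac{1}{6 \cdot 2^i} \log n$), and the generating-function computation of \Cref{lem:stanley,lem:avoid} shows that the proportion of permutations with no cycle in class $i$ is $O\big((n/(3 \cdot 2^i))^{-1/(6 \cdot 2^i)}\big)$. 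A union bound over the $r$ classes then yields the proposition: the factor $r$ comes from the union bound, and the $2^r$ and the exponent $1/(6\ell \cdot 2^r)$ come from the largest class, not from raising any probability to the $r$th power. Your instinct that the peculiar exponent reflects a lossy, geometrically structured scheme was correct, but the scheme lives on cycle lengths, and the per-class input is a ``no cycle with length in an odd-multiples class'' estimate rather than a fixed-length Poisson tail.
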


\begin{proof}
By \Cref{lem:avoid} below, we know that the proportion of elements of $S_n$ that have no cycles of length $j \cdot 3 \cdot 2^i$ with $j$ odd is $O((n/(3 \cdot 2^i))^{-\frac{1}{6 \cdot 2^i}})$.

Let $i_0 = \lceil \log_2(\ell)\rceil$.
Then the proportion of elements that have at least one cycle of length $j \cdot 3 \cdot 2^i$ with $j$ odd, for \emph{each} $i$ such that $i_0 + 1 \leq i < i_0 + r$, is 
\begin{align}
1 - o\pa{r\pa{\frac{n}{3\ell \cdot 2^r}}^{-\frac{1}{6\ell \cdot 2^r}}}
	\quad\text{for $n \gg_{\ell, r} 0$}.
\end{align}
In any such element, these $r$ cycles must be pairwise distinct because their lengths are.
Moreover, their lengths are divisible by $3$ and at least $3\ell$.
\end{proof}

\subsection{}

For any integers $n, k > 0$, let
\begin{align}
X_{n, k} = \{w \in S_n \mid \text{$w$ has no cycles of length $k, 3k, 5k, \ldots$}\}.
\end{align}

% https://tex.stackexchange.com/a/137076
\begin{lem}\label{lem:stanley}
We have
\begin{align}\label{eq:stanley}
\sum_{n \geq 0}
	{|X_{n, k}|} \frac{x^n}{n!}
	=
	(1 - x)^{-1} (1 - x^k)^{\frac{1}{k}}
	(1 - x^{2k})^{-\frac{1}{2k}}.
\end{align}
\end{lem}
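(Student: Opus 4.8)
The plan is to recognize the left-hand side as a cycle-type exponential generating function and evaluate it by the exponential formula. Recall the standard principle: for any set $A \subseteq \bb{Z}_{>0}$ of permitted cycle lengths, the EGF counting permutations all of whose cycle lengths lie in $A$ equals $\exp\bigl(\sum_{j \in A} x^j/j\bigr)$. This holds because a single cycle on $j$ labeled points can be formed in $(j-1)!$ ways, contributing $(j-1)!\,x^j/j! = x^j/j$ to the EGF of connected structures, and the exponential formula assembles an arbitrary permutation as a disjoint union of such cycles.

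By definition, $X_{n,k}$ consists of the permutations avoiding cycle lengths $k, 3k, 5k, \ldots$, that is, the odd multiples of $k$. So here $A$ is the complement in $\bb{Z}_{>0}$ of $\{(2j-1)k : j \geq 1\}$, and the left-hand side of \eqref{eq:stanley} equals $\exp(E(x))$, where
\begin{align}
E(x) = \sum_{j \geq 1} \frac{x^j}{j} - \frac{1}{k}\sum_{j \text{ odd}} \frac{x^{jk}}{j}.
\end{align}
First I would evaluate the two sums in closed form. The first is $-\log(1-x) = \log\frac{1}{1-x}$. For the second, I would use the identity $\sum_{j \text{ odd}} y^j/j = \tfrac12 \log\frac{1+y}{1-y}$, which follows by adding the series for $\log(1+y)$ and $-\log(1-y)$ (the even-index terms cancel and the odd-index terms double), evaluated at $y = x^k$. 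This yields
\begin{align}
E(x) = \log\frac{1}{1-x} - \frac{1}{2k}\log\frac{1+x^k}{1-x^k}.
\end{align}

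Exponentiating turns $E(x)$ into the product $(1-x)^{-1}\bigl(\tfrac{1+x^k}{1-x^k}\bigr)^{-1/(2k)}$, and the remaining step is purely algebraic. Using the factorization $1 - x^{2k} = (1-x^k)(1+x^k)$, I would rewrite $\frac{1+x^k}{1-x^k} = \frac{1-x^{2k}}{(1-x^k)^2}$, so that raising it to the $-1/(2k)$ power gives $(1-x^{2k})^{-1/(2k)} (1-x^k)^{1/k}$. Combining with the factor $(1-x)^{-1}$ reproduces the right-hand side of \eqref{eq:stanley} exactly.

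I expect no serious obstacle: the exponential formula does all of the conceptual work, and the only care required is in evaluating the odd-index sum correctly and in the final rearrangement of fractional powers. The one point worth verifying is that these are legitimate identities of formal power series (equivalently, they converge for $|x| < 1$), so that the branches of $\log$ and of the fractional powers—all taken to be the principal ones, with value $1$ at $x = 0$—are mutually consistent.
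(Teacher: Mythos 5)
Your proof is correct and takes essentially the same route as the paper: both recognize the left-hand side as the exponential generating function for permutations with forbidden cycle lengths, apply the exponential formula (the paper cites Stanley's display (5.30)), and finish with elementary power-series algebra. The only cosmetic difference is that the paper writes the sum over odd multiples of $k$ as (all multiples of $k$) minus (multiples of $2k$) and applies $\exp\bigl(\sum_{m \geq 1} X^m/m\bigr) = (1-X)^{-1}$ termwise, whereas you use the identity $\sum_{j \text{ odd}} y^j/j = \tfrac{1}{2}\log\tfrac{1+y}{1-y}$ and then factor $1 - x^{2k} = (1-x^k)(1+x^k)$; the computations are equivalent.
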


\begin{proof}
For each integer $m > 0$, fix an indeterminate $t_m$, and for each $w \in S_n$, let $\lambda_m(w)$ be the number of $m$-cycles in $w$.
Display (5.30) of \cite{stanley} says that
\begin{align}
\sum_{n \geq 0} \sum_{w \in S_n}
	t_1^{\lambda_1(w)} \cdots t_n^{\lambda_n(w)}
	\frac{x^n}{n!}
	=
	\exp\pa{
		\sum_{m \geq 1}
		t_m
		\frac{x^m}{m}
	},
\end{align}
where $\exp(X) = \sum_{n \geq 0} \frac{X^n}{n!}$ as a formal series.
Now set $t_m = 0$ whenever $m = jk$ with $j$ odd and $t_m = 1$ for all other $m$.
The left-hand side simplifies to that of \eqref{eq:stanley}, while the right-hand side simplifies to
\begin{align}
\exp\pa{
		\sum_{m \geq 1} \frac{x^m}{m}
		-
		\sum_{j \geq 1} \frac{x^{jk}}{jk}
		+
		\sum_{i \geq 1} \frac{x^{2ik}}{2ik}
	}.
\end{align}
To finish, use $\exp(\sum_{m \geq 1} \frac{X^m}{m}) = (1 - X)^{-1}$.
\end{proof}

\begin{lem}\label{lem:avoid}
For fixed $k > 0$ and $n \gg_k 0$, we have
\begin{align}
\frac{|X_{n, k}|}{|S_n|} = O\pa{\pa{\frac{n}{k}}^{-\frac{1}{2k}}},
\end{align}
where the big-O constant is independent of $k$.
\end{lem}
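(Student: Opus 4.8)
The plan is to identify $|X_{n,k}|/|S_n| = |X_{n,k}|/n!$ with the coefficient $[x^n]$ of the generating function of \Cref{lem:stanley}, and then extract its coefficient asymptotics by singularity analysis. First I would simplify the right-hand side of \eqref{eq:stanley} using the factorization $1 - x^{2k} = (1 - x^k)(1 + x^k)$, which collapses it to
\begin{align}
F_k(x) \vcentcolon= (1 - x)^{-1}\,(1 - x^k)^{1/(2k)}\,(1 + x^k)^{-1/(2k)}.
\end{align}
The two rightmost factors form a power series in $x^k$, say $H(x^k)$ with $H(y) = (1 - y)^{1/(2k)}(1 + y)^{-1/(2k)}$. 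Since $|X_{n,k}|/|S_n| = [x^n]F_k$ and the $x^n$-coefficient of $(1 - x)^{-1}H(x^k)$ is the partial sum $\sum_{jk \leq n}[y^j]H(y)$, I obtain
\begin{align}
\frac{|X_{n,k}|}{|S_n|} = \sum_{j = 0}^{M} [y^j]H(y) = [y^M]\,(1 - y)^{-a}(1 + y)^{-b},
\end{align}
where $M = \lfloor n/k \rfloor$, and $a = 1 - \tfrac{1}{2k}$, $b = \tfrac{1}{2k}$, using $H(y)/(1 - y) = (1 - y)^{-a}(1 + y)^{-b}$. The point of this maneuver is that, whereas $F_k$ has all of the $2k$-th roots of unity as singularities, the substitution $y = x^k$ collapses these into just two singularities of $(1 - y)^{-a}(1 + y)^{-b}$, at $y = 1$ and $y = -1$.

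Next I would apply singularity analysis in the single variable $y$. The dominant singularity is at $y = 1$, where $(1 + y)^{-b} \to 2^{-b}$ is analytic while $(1 - y)^{-a}$ carries the singular behavior; the Flajolet--Odlyzko transfer theorem then yields
\begin{align}
[y^M]\,(1 - y)^{-a}(1 + y)^{-b} = \frac{2^{-1/(2k)}}{\Gamma(1 - \tfrac{1}{2k})}\,M^{-1/(2k)}\,(1 + o(1))
\end{align}
as $M \to \infty$ for fixed $k$, since $a - 1 = -\tfrac{1}{2k}$. The competing singularity at $y = -1$ contributes only at the strictly smaller order $M^{1/(2k) - 1}$ (and at a merely comparable, still harmless, order when $k = 1$), so it is absorbed into the $o(1)$.

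It remains to make the bound uniform in $k$. Here $M = \lfloor n/k \rfloor \geq (n/k)/2$ once $n \geq 2k$, so $M^{-1/(2k)} \leq 2^{1/(2k)}(n/k)^{-1/(2k)} \leq \sqrt{2}\,(n/k)^{-1/(2k)}$. Meanwhile the leading constant $2^{-1/(2k)}/\Gamma(1 - \tfrac{1}{2k})$ is at most $1$ for every $k \geq 1$: indeed $2^{-1/(2k)} \leq 1$, and $\Gamma$ exceeds $1$ throughout $[\tfrac12, 1) \ni 1 - \tfrac{1}{2k}$ because it decreases from $\Gamma(\tfrac12) = \sqrt{\pi}$ to $\Gamma(1) = 1$ there. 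Folding the $(1 + o(1))$ into a constant valid for $n \gg_k 0$, I conclude $|X_{n,k}|/|S_n| \leq C\,(n/k)^{-1/(2k)}$ with $C$ an absolute constant. I expect the one genuine subtlety to be exactly this uniformity, rather than the asymptotic itself: the transfer theorem produces a clean leading term for each fixed $k$, but both its error term and the threshold on $n$ depend on $k$, so the task is to isolate precisely the $k$-independent leading constant and verify it is bounded, which reduces to the elementary $\Gamma$-function estimate above together with the harmless floor correction. Everything else is routine coefficient bookkeeping.
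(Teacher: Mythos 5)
Your proposal is correct, and it reaches the lemma by a genuinely different route from the paper, although the two arguments converge at the very end. You exploit the exact factorization $1 - x^{2k} = (1 - x^k)(1 + x^k)$ to rewrite the right-hand side of \eqref{eq:stanley} as $(1-x)^{-1}(1-x^k)^{1/(2k)}(1+x^k)^{-1/(2k)}$, and then identify $|X_{n,k}|/|S_n|$ \emph{exactly} with the single coefficient $[y^M](1-y)^{-a}(1+y)^{-b}$, $M = \lfloor n/k \rfloor$, to which you apply Flajolet--Odlyzko singularity analysis. The paper instead keeps $(1-x^{2k})^{-1/(2k)}$ intact and dominates it coefficientwise: the elementary inequality $\binom{-\alpha}{i}(-1)^i \leq 2\binom{-\alpha}{2i}$ for $\alpha \in (0,1]$ shows that its coefficients are bounded by those of $2(1-x^k)^{-1/(2k)}$, so the ratio is at most the $x^n$-coefficient of $2(1-x)^{-1}(1-x^k)^{1/(2k)}$, which collapses to the single signed binomial coefficient $2\binom{\frac{1}{2k}-1}{M}(-1)^{M}$; a cited asymptotic for binomial coefficients (Levrie) then finishes. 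So the paper trades your exact identity for an upper bound losing a factor of $2$, in exchange for needing no complex-analytic transfer machinery --- only a binomial inequality and a ready-made asymptotic --- while your version is exact and produces the true leading constant $2^{-1/(2k)}/\Gamma(1-\tfrac{1}{2k})$, but leans on the heavier two-singularity transfer theorem and needs the extra care you flag at $k = 1$: there the $y = -1$ singularity contributes at the same order $M^{-1/2}$, so strictly speaking it cannot be absorbed into the $o(1)$ of an asymptotic equivalent, only into the big-O bound, which is all you need. Notably, the endgames coincide: the paper's cited binomial asymptotic is precisely the single-singularity statement $[y^M](1-y)^{-a} \sim M^{a-1}/\Gamma(a)$, and both proofs secure uniformity in $k$ the same way, by pushing all $k$-dependence into the threshold $n \gg_k 0$ and invoking $\Gamma \geq 1$ on $[\tfrac{1}{2}, 1]$.
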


\begin{proof}
We will study the right-hand side of \eqref{eq:stanley}.
First, we expand 
\begin{align}
\label{eq:binom-1}
(1 - x)^{-1} (1 - x^k)^{\frac{1}{k}} 
	&= \frac{1 - x^k}{1 - x}
			\sum_{i \geq 0}
			\binom{\frac{1}{k} - 1}{i}
			(-1)^i
			x^{ik},\\
\label{eq:binom-2}
(1 - x^{2k})^{-\frac{1}{2k}}
	&= \sum_{i \geq 0}
		\binom{-\frac{1}{2k}}{i}
		(-1)^i
		x^{2ik}
\end{align}
The right-hand side of \eqref{eq:binom-1} simplifies to a power series with nonnegative coefficients.
As for \eqref{eq:binom-2}:
Observe that for any $\alpha \in (0, 1]$ and integer $i \geq 0$, we have
\begin{align}
\binom{-\alpha}{i}(-1)^i = \frac{\alpha(\alpha + 1) \cdots (\alpha + i - 1)}{i!} \leq 2 \cdot \frac{\alpha(\alpha + 1) \cdots (\alpha + 2i - 1)}{(2i)!} = 2\binom{-\alpha}{2i},
\end{align}
where we handle $i = 0$ separately to prove the inequality.
Therefore, for each $i \geq 0$, the coefficient of $x^{2ik}$ on the right-hand side of \eqref{eq:binom-2} is nonnegative and bounded above by $2\binom{-1/(2k)}{2i}$.
That is, the coefficients in the series expansion of $(1 - x^{2k})^{-\frac{1}{2k}}$ are nonnegative and bounded above by the respective coefficients in the series expansion of $2(1 - x^k)^{-\frac{1}{2k}}$.

Altogether, by \Cref{lem:stanley}, $|X_{n, k}|/|S_n|$ is bounded above by the coefficient of $x^n$ in the series expansion
\begin{align}
2(1 - x)^{-1} (1 - x^k)^{\frac{1}{k}} (1 - x^k)^{-\frac{1}{2k}}
&=
	2(1 - x)^{-1} (1 - x^k)^{\frac{1}{2k}} \\
&=	
	2\pa{\frac{1 - x^k}{1 - x}}
	\sum_{i \geq 0}
		\binom{\frac{1}{2k} - 1}{i}(-1)^i x^{ik}\\
&=
	2\sum_{n \geq 0}
		\binom{\frac{1}{2k} - 1}{\lfloor \frac{n}{k}\rfloor}(-1)^{\lfloor \frac{n}{k} \rfloor} x^n.
\end{align}
%Thus we have 
%\begin{align}
%\frac{|X_{n, k}|}{|S_n|}
%	\leq 2 \binom{\frac{1}{2k} - 1}{\lfloor \frac{n}{k}\rfloor} 
%	(-1)^{\lfloor \frac{n}{k} \rfloor}.
%\end{align}
Finally, for any $\alpha \in \bb{R} \setminus \bb{Z}_{\geq 0}$, it is known \cite[Thm.\ 2]{levrie} that
\begin{align}
\left|\binom{\alpha}{m}\right|
	\sim
		\frac{1}{|\Gamma(-\alpha)m^{1 + \alpha}|}
		\quad\text{as $m \to \infty$}.
\end{align}
Note that taking $\alpha = \frac{1}{2k} - 1$ gives $\frac{1}{2} \leq -\alpha \leq 1$.
On this interval, $\Gamma(-\alpha) \geq 1$, so we're done.
\end{proof}

\section{Conclusion}

\noindent
What follows is a quantitative refinement of \Cref{thm:main}.

\begin{thm}\label{thm:main-quantitative}
For any $0 < \epsilon < 1$ and integer $r > \log_{1 - 6^{-6}}(\epsilon)$, we can pick $N$ large enough that for all $n \geq N$, we have
\begin{align}\label{eq:quantitative}
\frac{|S_n(7, r)|}{|S_n|} \cdot (1 - (1 - 6^{-6})^r)
	&> 1 - \epsilon
\end{align}
in the notation of \Cref{sec:cycles}.
For such $n$, the proportion of simple braids on $n$ strands that have positive topological entropy is greater than $1 - \epsilon$.
\end{thm}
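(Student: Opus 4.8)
The plan is to split the theorem into its two assertions: the numerical inequality \eqref{eq:quantitative}, and its translation into a statement about simple braids. For \eqref{eq:quantitative}, first unpack the hypothesis on $r$. Because $0 < 1 - 6^{-6} < 1$, the condition $r > \log_{1 - 6^{-6}}(\epsilon)$ is equivalent to $(1 - 6^{-6})^r < \epsilon$, hence to $q > 1 - \epsilon$, where I abbreviate $q = 1 - (1 - 6^{-6})^r$. Thus $q$ is a fixed constant, depending only on $\epsilon$ and $r$, with $q > 1 - \epsilon$. The inequality \eqref{eq:quantitative} is then $\frac{|S_n(7,r)|}{|S_n|} \cdot q > 1 - \epsilon$, i.e.\ $\frac{|S_n(7,r)|}{|S_n|} > \frac{1 - \epsilon}{q}$, whose right-hand side is strictly below $1$. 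By \Cref{prop:cycles} with $\ell = 7$ and our fixed $r$, the negative exponent forces the $o(\cdot)$ term to vanish, so $\frac{|S_n(7,r)|}{|S_n|} \to 1$ as $n \to \infty$; hence there is an $N = N(\epsilon, r)$ such that this proportion exceeds $\frac{1 - \epsilon}{q}$ for all $n \geq N$, establishing \eqref{eq:quantitative}.

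For the second assertion, recall that $w \mapsto \sigma_w$ is a bijection from $S_n$ onto the set $E_n$ of simple braids, so the proportion of simple braids of positive entropy equals $\frac{|\{w \in S_n \mid h(\sigma_w) > 0\}|}{|S_n|}$. The structural point is that the number of relevant cycles is an invariant of the relation $\approx$, so $S_n(7,r)$---the set of permutations with at least $r$ relevant cycles---is a union of $\approx$-classes. Decomposing $S_n$ into $\approx$-classes $D$ and applying \Cref{prop:main}, every class $D \subseteq S_n(7,r)$ satisfies $\frac{|\{w \in D \mid h(\sigma_w) > 0\}|}{|D|} \geq 1 - (1 - 6^{-6})^{r(D)} \geq q$, where $r(D) \geq r$ is the common number of relevant cycles in $D$ and the last step uses $0 < 1 - 6^{-6} < 1$. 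Summing over such $D$ and discarding the nonnegative contributions of the remaining classes gives
\[
\frac{|\{w \in S_n \mid h(\sigma_w) > 0\}|}{|S_n|} \geq \frac{|S_n(7,r)|}{|S_n|} \cdot q > 1 - \epsilon
\]
for $n \geq N$, by the first assertion.

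The argument is a bookkeeping combination of \Cref{prop:main} and \Cref{prop:cycles}, so I do not expect a genuine difficulty. The only points needing care are the monotonicity of the bound $1 - (1 - 6^{-6})^{r(D)}$ in $r(D)$, which lets the per-class estimate be replaced by the single constant $q$ throughout $S_n(7,r)$, and the observation that $S_n(7,r)$ is \emph{exactly} a union of $\approx$-classes, so that no class is split between the region where \Cref{prop:main} applies and its complement. The remaining content is the elementary rearrangement of the logarithmic hypothesis and the limit furnished by \Cref{prop:cycles}.
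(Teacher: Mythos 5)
Your proposal is correct and follows essentially the same route as the paper: \Cref{prop:cycles} yields the inequality \eqref{eq:quantitative} for all large $n$, and \Cref{prop:main} converts it into the statement about simple braids via the bijection $w \mapsto \sigma_w$. The only difference is that you spell out what the paper's two-line proof leaves implicit---that $S_n(7, r)$ is exactly a union of $\approx$-classes, each with at least $r$ relevant cycles, so the per-class bound of \Cref{prop:main} can be weakened to the uniform constant $1 - (1 - 6^{-6})^r$ and summed---which is a clarification of the same argument, not a different one.
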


\begin{proof}
\Cref{prop:cycles} implies the first claim.
\Cref{prop:main} gives
\begin{align}
\frac{|\{w \in S_n(7, r) \mid h(\sigma_w) > 0\}|}{|S_n(7, r)|} \geq 1 - (1 - 6^{-6})^r,
\end{align}
from which
\begin{align}
\frac{|\{w \in S_n \mid h(\sigma_w) > 0\}|}{|S_n|}
	&\geq \frac{|S_n(7, r)|}{|S_n|} \cdot \frac{|\{w \in S_n(7, r) \mid h(\sigma_w) > 0\}|}{|S_n(7, r)|}\\
	&> 1 - \epsilon,
\end{align}
proving the second claim.
\end{proof}

%--------------------------------------------------------------------------

%	\backmatter
\frenchspacing

%a mapping class $\beta \in M(S, \partial S)$ is \dfemph{reducible} iff we can find a nonempty, finite set $\{c_i\}_i$ of isotopy classes of essential, simple closed curves in $S$ rel $\partial S$ that are fixed by $\beta$ and whose pairwise intersection numbers vanish.
%In this situation, $\{c_i\}_i$ is called a \dfemph{reduction system} for $\beta$.
%We can lift each curve class $c_i$ to a representative $\gamma_i$, and the mapping class $\beta$ to a representative $f : S \to S$, such that the $\gamma_i$ are fixed by $f$ and pairwise disjoint \cite[\S{13}.2.2]{fm}.
%Then $f$ restricts to a map $S' \to S'$, where $S' \subseteq S$ is the open, possibly disconnected complement of the curves $\gamma_i$.

%The action of $f$ on $S'$ permutes its connected components.
%The orbits of this action define a partition of $S'$ into minimal $f$-stable clopen subsets $S'_j \subseteq S'$.
%Let $S_j$ be the closure of $S'_j$ in $S$.
%Then $f$ restricts to a map $f_j : S_j \to S_j$ fixing the boundary $\partial S_j$, giving us an element $\beta_j \in M(S_j, \partial S_j)$.
%We say that the mapping classes $\beta_j$ are the \dfemph{components} of $\beta$ with respect to the reduction system $\{c_i\}_i$.

\end{document}